\theoremstyle{plain}
\newtheorem{thm}{Theorem}
\newtheorem{cor}[thm]{Corollary}
\newtheorem{lem}[thm]{Lemma}
\newtheorem{prop}[thm]{Proposition}
\newtheorem{rem}[thm]{Remark}
\newtheorem{example}[thm]{Example}
\newcommand{\Ann}{\mathrm{Ann}}
\newcommand{\RR}{\mathbb{R}}
\newcommand{\NN}{\mathbb{N}}
\newcommand{\Soc}[1]{\mathrm{Soc}({#1})}
\newcommand{\SocZ}[1]{\mathrm{Soc}_2({#1})}
\newcommand{\m}{\mathfrak{m}}
\newcommand{\Max}[1]{\mathrm{Max}(#1)}
\newcommand{\gr}[1]{\mathrm{gr}(#1)}
\title[Simple Modules Over quasi-local rings]{A note on simple modules over quasi-local rings.}
\dedicatory{Dedicated to John Clark}
\date{\today}
\author{Paula A.A.B. Carvalho}
\author{Christian Lomp}
\address{Department of Mathematics of the Faculty of Science of the University of Porto, Rua Campo
Alegre, 687, 4169-007 Porto, Portugal}
\author{Patrick F. Smith}
\address{Department of Mathematics, University of Glasgow, Glasgow, United Kingdom}
\begin{document}
\maketitle
\begin{abstract}
Matlis showed that the injective hull of a simple module over a commutative Noetherian ring is Artinian. Many non-commutative Noetherian rings whose injective hulls of simple modules are locally Artinian have been extensively studied in \cite{BrownCarvalhoMatczuk,CarvalhoLompPusat, CarvalhoMusson, CarvalhoHatipogluLomp, Hatipoglu,HatipogluLomp, Jategaonkar, Musson, SantanaVinciguerra}. This property had been denoted by property $(\diamond)$. In this paper we investigate, which non-Noetherian semiprimary commutative quasi-local rings $(R,\m)$ satisfy property $(\diamond)$. 
For quasi-local rings  $(R,\m)$ with $\m^3=0$, we prove a characterisation of this property in terms of  the dual space of $\Soc{R}$.  Furthermore, we show that $(R,\m)$ satisfies $(\diamond)$ if and only if its associated graded ring $\gr{R}$ does. 

Given a field $F$ and vector spaces $V$ and $W$ and a symmetric bilinear map $\beta:V\times V\rightarrow W$ we consider commutative quasi-local rings of the form $F\times V \times W$, whose product is given by 
$(\lambda_1, v_1,w_1)(\lambda_2,v_2,w_2) = (\lambda_1\lambda_2, \lambda_1v_2+\lambda_2v_1, \lambda_1w_2+\lambda_2w_1+\beta(v_1,v_2))$ in order to build new examples and to illustrate our theory. In particular we prove that any quasi-local commutative ring with radical cube-zero does not satisfy $(\diamond)$ if and only if it has a factor, whose associated graded ring is of the form  $F\times V \times F$ with $V$  infinite dimensional and $\beta$  non-degenerated.
\end{abstract}

\section{Introduction}
The structure and in particular finiteness conditions of injective hulls of simple modules have been widely studied. Rosenberg and Zelinsky's work \cite{RosenbergZelinsky} is one of the earliest studies of finiteness conditions on the injective hull of a simple module. Matlis showed in his seminal paper \cite{Matlis} that any injective hull of a simple module over a commutative Noetherian module is Artinian. Jans in \cite{Jans} has termed a ring $R$ to be left co-noetherian if every simple left $R$-module has an Artinian injective hull. Vamos showed in \cite{Vamos} that a commutative ring $R$ is co-noetherian if and only if $R_\m$ is Noetherian for any maximal ideal $\m \in \Max{R}$ - generalizing in this way Matlis' result.
In connection with the Jacobson Conjecture for non-commutative Noetherian rings, 
Jategaonkar showed in \cite{Jategaonkar} (see also \cite{Cauchon, Schelter}) that the injective hulls of simple modules are locally Artinian, i.e. any finitely generated submodule is Artinian, provided the ring $R$ is fully bounded Noetherian.
We say that a ring $R$ satisfies condition $(\diamond)$ if 
\begin{equation}\tag{$\diamond$}\label{diamond_condition}
\mbox{Injective hulls of simple left $R$-modules are locally Artinian.}
\end{equation}
In this paper we study ($\diamond$) for, not necessarily Noetherian, quasi-local commutative rings $R$ with maximal ideal $\m$ such that $\m^3=0$. A description of such rings is given in terms of the dual space of $\Soc{R}$ seen as a vector space over $R/\m$ (Theorem \ref{main}). Furthermore, we relate property $(\diamond)$ of $(R,\m)$ with its associated graded ring $\gr{R}=R/\m \oplus \m/\m^2 \oplus \m^2$ in Corollary \ref{associatedgraded}. Given a field $F$ and vector spaces $V$ and $W$ and a symmetric bilinear map $\beta:V\times V\rightarrow W$ we consider commutative quasi-local rings of the form $F\times V \times W$, whose product is given by 
$$(\lambda_1, v_1,w_1)(\lambda_2,v_2,w_2) = (\lambda_1\lambda_2, \lambda_1v_2+\lambda_2v_1, \lambda_1w_2+\lambda_2w_1+\beta(v_1,v_2))$$
to build new examples and to illustrate our theory. In particular we prove in Proposition \ref{badfactor} that a quasi-local commutative ring with radical cube-zero does not satisfy $(\diamond)$ if and only if it has a factor whose associated graded ring is of the form $F\times V \times F$ with $V$ of infinite dimension and $\beta$ non-degenerate.

\section{Preliminaries}

The following Lemma shows that conditon $(\diamond)$ is intrinsically linked to Krull's intersection Theorem:

\begin{lem}\label{KrullIntersection}
Let $R$ be a (not necessarily commutative) ring with Jacobson radical $J$, such that finitely generated Artinian modules
have finite length. 
If $R$ has property $(\diamond)$, then for any left ideal $I$ of $R$ one has
$$\bigcap_{n=0}^\infty (I+J^n)  = I$$
\end{lem}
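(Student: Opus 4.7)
The plan is to argue by contradiction. Suppose some $z \in \bigcap_{n}(I+J^n)$ satisfies $z \notin I$. Set $M = R/I$ and $\bar z = z + I$. Since $J$ is a two-sided ideal, $J^n R = J^n$, so for every $n$ one has $(I+J^n)/I = J^n M$; hence $\bar z$ is a nonzero element of $M$ that lies in $J^n M$ for all $n \geq 0$. The goal is to locate $\bar z$ inside a cyclic quotient of $M$ that embeds into the injective hull of a simple module, so that $(\diamond)$ together with the length hypothesis can be brought to bear.

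The central step is a Zorn's lemma construction. Among the submodules of $M$ that do not contain $\bar z$, choose a maximal such submodule $L$ (the set is non-empty since $0$ belongs to it, and the union of a chain of such submodules still avoids $\bar z$). By maximality, every strictly larger submodule of $M$ contains $\bar z$, so $\bar z + L$ lies in every nonzero submodule of $M/L$. Therefore $S := R(\bar z + L)$ is the unique minimal nonzero submodule of $M/L$; in particular $S$ is simple and essential in $M/L$, so $M/L \hookrightarrow E(S)$.

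The finishing step combines $(\diamond)$ with the finite-length hypothesis. Since $M = R/I$ is cyclic, so is $M/L$; hence $M/L$ is a finitely generated submodule of the locally Artinian module $E(S)$, and is therefore Artinian. By the hypothesis that finitely generated Artinian $R$-modules have finite length, $M/L$ has some finite length $\ell$, and because each composition factor is a simple left $R$-module (annihilated by $J$), it follows that $J^\ell (M/L) = 0$. But $\bar z \in J^\ell M$ forces $\bar z + L \in J^\ell(M/L) = 0$, i.e.\ $\bar z \in L$, contradicting the choice of $L$.

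The point to be aware of is that one cannot simply argue that $\bigcap_n J^n E(S) = 0$: in general this intersection is nonzero (for example, if $R = k[[x]]$ then every element of the Pr\"ufer-type module $E(R/J)$ lies in $J^n E(R/J)$ for every $n$). The Zorn reduction is what circumvents this: the locally Artinian hypothesis is used not on $E(S)$ itself but on the cyclic submodule $M/L \subseteq E(S)$ containing the problematic element, whose cyclicity is inherited from $R/I$.
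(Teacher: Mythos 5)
Your proof is correct and follows essentially the same route as the paper: the Zorn construction of a submodule $L\subseteq R/I$ maximal with respect to avoiding $\bar z$ is exactly the element-wise form of Birkhoff's subdirect representation theorem that the paper invokes to produce the subdirectly irreducible quotients $R/K_i$ with $\bigcap K_i=I$. The only difference is presentational --- you argue by contradiction for a single offending element, while the paper handles all elements at once and computes the intersection directly; the key step, using $(\diamond)$ together with the finite-length hypothesis to force a power of $J$ to annihilate the cyclic subdirectly irreducible quotient, is identical.
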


\begin{proof}
Let $I$ be any left ideal of $R$. Then R/I embeds into a product of cyclic modules $R/K_i$ with essential simple socle by
Birkhoff's theorem, where $I\subseteq K_i$ and the intersection $\bigcap K_i = I$.  By hypothesis each of these modules
$R/K_i$ is Artinian and hence has finite length.
Thus there exists a number $n_i\geq 1$ such that $J^{n_i}R/K_i =0 \Leftrightarrow  J^{n_i} \subseteq K_i.$
 Hence $I+J^{n_i} \subseteq K_i$ for all $i$ and as the intersection of the $K_i$'s is $I$, we have
$$I=\bigcap_i K_i \supseteq \bigcap_i \left(I+J^{n_i}\right) \supseteq \bigcap_n  (I+J^n) \supseteq I.$$
\end{proof}

\begin{rem}\label{remark}
\begin{enumerate}

\item  Assuming the hypotheses of Lemma \ref{KrullIntersection}, one can easily adapt the above proof to show that $\bigcap_{n=0}^\infty J^nM = 0$ for any finitely generated left $R$-module $M$. Furthermore, if $M$ is a finitely generated essential extension of a simple left $R$-module, then there exists $n>0$ such that $J^n M = 0$.

\item Finitely generated Artinian left $R$-modules have finite length if for example $R$ is left Noetherian or if $R$ is
commutative. For the later case let M be an Artinian module over a commutative ring R generated by $x_1, \ldots, x_k$.
Then 
$$R/Ann(M) \rightarrow R(x_1, ..., x_k) \subseteq M^k$$  is an embedding. Since $M^k$ is Artinian, $R/Ann(M)$ is
Artinian and by the Hopkins-Levitzki's Theorem $R/Ann(M)$ is Noetherian. As M is finitely generated over $R/Ann(M)$, M is
also Noetherian, i.e. has finite length.

\item We follow the terminology of commutative ring theory and call a commutative ring $R$ {\it quasi-local} if it has a unique maximal ideal $\m$. A local ring is a commutative Noetherian quasi-local ring. 
From Lemma \ref{KrullIntersection} we see that any  commutative quasi-local ring  $(R,\m)$, that satisfies $(\diamond)$, is separated in the
$\m$-adic topology. Moreover if $\m^n$ is idempotent for some $n\geq 1$, then $\m^n=0$. 
\end{enumerate}
\end{rem}

Recall that a ring $R$ with Jacobson radical $J$ is called {\it semilocal} if $R/J$ is semisimple. A semilocal ring with nilpotent Jacobson radical is called {\it semiprimary}.
The second socle of a module $M$ is the submodule $\SocZ{M}$ of $M$ with $\Soc{M/\Soc{M}} = \SocZ{M}/\Soc{M}$.
For an ideal $K$ of $R$, denote by $\Ann_M(K)$ the set of elements $m\in M$ such that $Km=0$. For a ring $R$ with $R/J$ semisimple, it is well-known that $\Ann_M(J)=\Soc{M}$ and that $\Ann_M(J^2)=\SocZ{M}$. For a left ideal $I$ set $(I:K)=\{r\in R\mid Kr\subseteq I\}$.

\begin{prop}\label{semiprimary} The following statements are equivalent, for a semiprimary ring $R$ with Jacobson radical $J$. 
\begin{enumerate}
\item[(a)] $R$ has property $(\diamond)$.
\item[(b)] $\SocZ{M}$ has finite length, for any left $R$-module $M$ with $\Soc{M}$ finitely generated.
\item[(c)] $(I:J^2)/I$ is finitely generated, for any left ideal $I$ of $R$ with $(I:J)/I$ finitely generated.
\end{enumerate}
\end{prop}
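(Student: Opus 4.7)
The plan is to establish the three-way equivalence via the cycle $(b) \Rightarrow (c) \Rightarrow (a) \Rightarrow (b)$. The implication $(b) \Rightarrow (c)$ is immediate upon applying $(b)$ to the cyclic module $M = R/I$: the socle is $\Soc{R/I} = (I:J)/I$, the second socle is $\SocZ{R/I} = (I:J^2)/I$, and finite length implies finite generation.

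For $(c) \Rightarrow (a)$, I aim to show that every cyclic submodule $Rx = R/I$ of $E(S)$ has finite length, for each simple left $R$-module $S$; since a finitely generated submodule of $E(S)$ is a finite sum of cyclic submodules and finite length is preserved under finite sums, this gives that $E(S)$ is locally Artinian. Essentiality of $S$ in $E(S)$ forces $\Soc{R/I} = S$, which is finitely generated. The key step is to iterate $(c)$ along the chain $I \subseteq (I:J) \subseteq (I:J^2) \subseteq \cdots$: writing $I_n = (I:J^n)$, one observes that $(I_n:J)/I_n = (I:J^{n+1})/(I:J^n)$ is a quotient of $(I:J^{n+1})/I$. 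An induction starting from $(I:J)/I$ finitely generated and applying $(c)$ at each step shows that every $(I:J^n)/I$ is finitely generated; hence each successive quotient $(I:J^{n+1})/(I:J^n)$ is a finitely generated semisimple $R/J$-module, so of finite length. The semiprimary hypothesis $J^t=0$ for some $t$ implies $(I:J^t) = R$, and the resulting finite filtration of $R/I$ by finite-length factors exhibits $R/I$ as of finite length.

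For $(a) \Rightarrow (b)$, given $M$ with $\Soc{M} = S_1 \oplus \cdots \oplus S_k$ finitely generated, I would embed $M$ essentially into $E(\Soc{M}) = \bigoplus_i E(S_i)$, a finite direct sum of locally Artinian modules and hence itself locally Artinian (any finitely generated submodule is contained in the direct sum of its finitely generated projections, each Artinian by $(a)$). I would first re-derive $(c)$ from $(a)$, using that $R/I$ sits as a finitely generated submodule of the locally Artinian $E(\Soc{R/I})$, hence is Artinian and of finite length by Remark \ref{remark}. The main obstacle, and the core of this direction, is to use the derived $(c)$ together with the filtration from the previous paragraph—exploiting $J^2\SocZ{M}=0$ and the essential inclusion $\Soc{M}\subseteq\SocZ{M}$—to bound the finite length of $\SocZ{M}$ itself: this requires promoting the uniform length bound on cyclic submodules of $\SocZ{M}$ (obtained from $(c)$ applied inside the locally Artinian envelope) to a length bound on $\SocZ{M}$, by splitting the semisimple quotient $\SocZ{M}/\Soc{M}$ across the finitely many simple components $E(S_i)$ and using that the essential extension controls the number of summands.
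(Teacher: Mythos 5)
Your cycle $(b)\Rightarrow(c)\Rightarrow(a)$ is correct and is essentially the paper's argument: $(b)\Rightarrow(c)$ by applying $(b)$ to $M=R/I$ with $\Soc{R/I}=(I:J)/I$ and $\SocZ{R/I}=(I:J^2)/I$, and $(c)\Rightarrow(a)$ by the same iteration along $I\subseteq (I:J)\subseteq (I:J^2)\subseteq\cdots$, using $J^t=0$ to exhaust $R/I$ by finitely many finitely generated semisimple layers; your reduction of finitely generated submodules of $E(S)$ to finite sums of cyclic ones is fine.

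The genuine gap is in $(a)\Rightarrow(b)$: you explicitly leave the passage from a length bound on cyclic submodules of $\SocZ{M}$ to finiteness of the length of $\SocZ{M}$ as an unresolved ``main obstacle'', and that step cannot be carried out for an arbitrary module $M$. Essentiality of $\Soc{M}$ bounds only the uniform dimension of $\SocZ{M}$, not the length of the semisimple quotient $\SocZ{M}/\Soc{M}$. Concretely, let $R=F\times V$ be the trivial extension of an infinite dimensional space $V$, a semiprimary ring with $J^2=0$ satisfying $(\diamond)$ by Lemma \ref{squarezero}, and let $M=\mathrm{Hom}_F(R,F)$, which is exactly $E(R/J)$: its socle is simple, every cyclic submodule has length at most $2$, but $\SocZ{M}=M$ and $M/\Soc{M}\cong V^{*}$ has infinite length; so for arbitrary $M$ statement $(b)$ is strictly stronger than $(a)$ and no refinement of your splitting argument can close the gap. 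The intended (and used) reading, which is also how the paper's own proof of $(a)\Rightarrow(b)$ begins, is that $M$ is finitely generated; note that $(b)$ restricted to cyclic modules already suffices for your $(b)\Rightarrow(c)$. With $M$ finitely generated the direction closes directly, without the detour through re-deriving $(c)$: $M$ embeds in $E(\Soc{M})=\bigoplus_{i=1}^{k}E(S_i)$, its projections are finitely generated submodules of the $E(S_i)$ and hence Artinian by $(\diamond)$, so $M$ is Artinian; since $R$ is semiprimary, the finitely many layers $J^iM/J^{i+1}M$ are Artinian semisimple, so $M$, and a fortiori $\SocZ{M}$, has finite length.
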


\begin{proof}
Note first that since $R$ is semiprimary, there exists $n\geq 0$ such that $J^n=0$. Moreover since $R$ is semilocal, $\Soc{M}=\mathrm{Ann}_M(J) = \{m \in M\mid Jm=0\}$ for any left $R$-module $M$.
In particular any left $R$-module has an essential socle, since any  left $R$-module $M$ has a finite socle series:
$$ 0=J^nM \subseteq J^{n-1}M \subseteq \cdots \subseteq JM \subseteq M.$$ 
$(a)\Rightarrow (b)$ If $R$ satisfies $(\diamond)$ then any finitely generated module with finitely generated (essential) socle is Artinian. Hence if $\Soc{M}$ is finitely generated, $M$ must be Artinian and hence $M/\Soc{M}$ is Artinian.

$(b)\Rightarrow (c)$ for $M=R/I$ one has $\Soc{R/I} = (I:J)/I$ as mentioned above. Moreover, $(I:J^2)=((I:J):J)$ and therefore $(I:J^2)/(I:J) = \Soc{R/(I:J)} = \Soc{M/\Soc{M}}$. Thus the statement follows from $(b)$.

$(c)\Rightarrow (a)$ is clear since if $I$ is a left ideal such that $M=R/I$ is a cyclic essential extension of a simple left $R$-module, then $(I:J)/I$ is cyclic and by assumption $(I:J^2)/I$ is finitely generated. Hence $(I:J^2)/I$ has finite length. Applying our hypothesis to $I'=(I:J)$, we can conclude that $(I:J^3)/I$ has finite length. Continuing we have also that $R/I=(I:J^n)/I$ has finite length. 
\end{proof}

A sufficient condition for a ring to satisfy $(\diamond)$ is given by the following Lemma.
\begin{lem}\label{soc-artinian} Any ring $R$, with $R/\Soc{_RR}$ Artinian, satisfies $(\diamond)$.
\end{lem}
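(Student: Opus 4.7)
The plan is, given a simple left $R$-module $S$, to show that every finitely generated submodule $N$ of the injective hull $E=E(S)$ is Artinian. Set $T=\Soc{_RR}$. First I would observe that $T$ is a two-sided ideal of $R$: for any $r\in R$, right multiplication by $r$ is a left $R$-module endomorphism of $R$, so $Tr$ is a homomorphic image of the semisimple left $R$-module $T$, hence again semisimple, and therefore $Tr\subseteq T$. Thus the hypothesis makes sense as a statement about the factor ring $R/T$, which is Artinian and, by the Hopkins-Levitzki theorem, also Noetherian.

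The second step is to prove that $TN\subseteq \Soc{N}$. For any $t\in T$ and $n\in N$, the cyclic left $R$-module $R(tn)=(Rt)n$ is a homomorphic image of the semisimple module $Rt\subseteq T$, so it is itself semisimple; summing over all such $t$ and $n$ shows that $TN$ is a semisimple submodule of $N$. Because $E$ is essential over the simple module $S$, every nonzero submodule of $E$ meets $S$, so every simple submodule of $E$ coincides with $S$; hence $\Soc{E}=S$. Combining these, $TN\subseteq\Soc{N}\subseteq S$, and in particular $TN$ is Artinian.

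To finish, I would use that $N/TN$ is a finitely generated left module over $R/T$. Since $R/T$ is both Artinian and Noetherian, $N/TN$ has finite length as an $R/T$-module, hence as an $R$-module, so it is Artinian. The submodule $TN$ and the quotient $N/TN$ of $N$ both being Artinian forces $N$ itself to be Artinian, as required. The only non-routine points are recognising that $T$ is two-sided and that $TN$ sits inside $\Soc{N}$; both reduce to the standard fact that a homomorphic image of a semisimple module is semisimple, so I do not expect any real obstacle beyond this observation.
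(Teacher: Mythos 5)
Your proof is correct and is essentially the paper's argument: both factor out the image of $\Soc{_RR}$ in the essential extension, note that it is semisimple and hence contained in the simple socle, and conclude because the quotient is a (finitely generated) module over the Artinian ring $R/\Soc{_RR}$. The only differences are cosmetic: the paper reduces to cyclic essential extensions $R/I$ with a case split on whether $\Soc{_RR}\subseteq I$, whereas you treat an arbitrary finitely generated $N\subseteq E(S)$ uniformly, and your appeal to Hopkins--Levitzki is superfluous, since a finitely generated module over a left Artinian ring is already Artinian.
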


\begin{proof}
Suppose $I\subset K \subseteq R$ are ideals such that $K/I$ is a simple $R$-module and essential in $R/I$. If $ \Soc{R}\subseteq I$, then $R/I$ is a factor of $R/\Soc{R}$  and hence Artinian. If $ \Soc{R}\nsubseteq  I$, then $( \Soc{R}+I)/I$ is a semisimple submodule of $R/I$ and hence must equal $K/I$, i.e. $ \Soc{R}+I=K$. As  a quotient of $R/\Soc{R}$, the module  $R/K=R/(\Soc{R}+I)$ is a Artinian and so is $R/I$.\end{proof}

Clearly it is not necessary for a ring $R$ with $(\diamond)$ to satisfy $R/\Soc{_RR}$ being Artinian. Moreover, Example \ref{ex_integral} shows that there are commutative rings $R$ such that $R/\Soc{R}$ satisfies $(\diamond)$, but $R$ does not. 

In recent papers \cite{CarvalhoLompPusat, CarvalhoHatipogluLomp, CarvalhoMusson, HatipogluLomp}, several non-commutative Noetherian rings have been shown to satisfy $(\diamond)$. In this note we intend to study condition  $(\diamond)$ for non-Noetherian commutative rings.

\section{Local-Global Argument}

Jans in \cite{Jans}{ defined a ring $R$ to be left co-noetherian if for every simple left $R$-module its injective hull is Artinian. Vamos has shown in \cite{Vamos} that 
a commutative ring $R$ is co-noetherian if and only if $R_\m$ is Noetherian for all $\m\in\Max{R}$.

The following Lemma shows the relation between co-Noetherianess and condition $(\diamond)$ for commutative quasi-local rings.  The proof follows the ideas of \cite{Smith}*{Theorem 1.8}.

\begin{lem}
The following statements are equivalent for a commutative quasi-local ring $R$ with maximal ideal $\m$.
\begin{enumerate}
\item[(a)] $R$ is Noetherian.
\item[(b)] $R$ is co-noetherian.
\item[(c)] $R$ satisfies $(\diamond)$ and $\m/\m^2$ is finitely generated.
\item[(d)] $R$ satisfies $\bigcap_{n=0}^\infty (I+\m^n)  = I$ for all ideals $I$ of $R$ and $\m/\m^2$ is finitely
generated.
\end{enumerate}
\end{lem}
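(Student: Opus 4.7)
The plan is to prove the cycle $(a) \Rightarrow (b) \Rightarrow (c) \Rightarrow (d) \Rightarrow (a)$. Three of these steps are short. For $(a) \Rightarrow (b)$ I would invoke Matlis' theorem directly. For $(b) \Rightarrow (c)$, condition $(\diamond)$ is immediate since Artinian modules are trivially locally Artinian, and Vamos' theorem applied in the quasi-local case (where $R = R_\m$) upgrades co-Noetherianness to Noetherianness, which in particular forces $\m/\m^2$ to be finitely generated. For $(c) \Rightarrow (d)$ I would apply Lemma \ref{KrullIntersection}, whose hypothesis on finitely generated Artinian modules having finite length is met in the commutative setting by Remark \ref{remark}(2).

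The substantive content is $(d) \Rightarrow (a)$, for which the plan is to pass to the associated graded ring $\gr{R} = \bigoplus_{j \geq 0} \m^j/\m^{j+1}$. Since $\m/\m^2$ is finitely generated as an $R/\m$-module, say by the images of $x_1, \ldots, x_n \in \m$, the ring $\gr{R}$ is a homomorphic image of the polynomial ring $(R/\m)[X_1, \ldots, X_n]$ and is therefore Noetherian.

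For an arbitrary ideal $I$ of $R$ I would form the leading-term ideal
$$I^* = \bigoplus_{m \geq 0} \frac{(I \cap \m^m) + \m^{m+1}}{\m^{m+1}} \subseteq \gr{R},$$
pick finitely many homogeneous generators $\bar{a}_1, \ldots, \bar{a}_k$ with $\bar{a}_i \in (I \cap \m^{m_i})/\m^{m_i+1}$, and lift them to $a_i \in I \cap \m^{m_i}$. Setting $I' := (a_1, \ldots, a_k) \subseteq I$, the construction ensures $(I')^* = I^*$. Given $x \in I \cap \m^m$, the class of $x$ in $I^*_m = (I')^*_m$ can be written $\bar{x} = \sum \bar{c}_i \bar{a}_i$ with $\bar{c}_i$ homogeneous of degree $m - m_i$; lifting to $c_i \in \m^{m - m_i}$ and subtracting yields $x - \sum c_i a_i \in I \cap \m^{m+1}$ (noting that each $c_i a_i \in I' \subseteq I$, so the difference stays in $I$). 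Iterating the argument on this new element shows $x \in I' + \m^N$ for every $N \geq m$. Hypothesis (d) applied to the finitely generated ideal $I'$ then gives $x \in \bigcap_N (I' + \m^N) = I'$, so $I = I'$ is finitely generated and $R$ is Noetherian.

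The main obstacle I expect is precisely this last iteration: without a completeness hypothesis one cannot literally form the infinite sum $\sum_i \bigl(\sum_j c_i^{(j)}\bigr) a_i$ that the iteration suggests. The crucial observation is that (d) is assumed for \emph{every} ideal, in particular for the finitely generated $I'$ we construct mid-proof, so that the missing limit is replaced by a single application of the Krull-type intersection identity. A small verification I would make explicit is that the difference $x - \sum c_i a_i$ genuinely lies in $I$ at each stage, so that the inductive construction stays inside $I$ and the degree strictly increases.
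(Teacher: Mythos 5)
Your proof is correct, and for the one substantive implication it takes a genuinely different route from the paper. The easy steps coincide in substance: Matlis/Vamos for the equivalence of (a) and (b) (using that $R_\m\cong R$ for a quasi-local ring), and Lemma \ref{KrullIntersection} together with Remark \ref{remark}(2) for $(c)\Rightarrow(d)$; whether one arranges these as a cycle or as the paper does is immaterial. For $(d)\Rightarrow(a)$ the paper argues directly inside $R$: it first gets $\m$ finitely generated from $\m=B+\m^n$ and (d), then invokes Cohen's theorem that an ideal $Q$ maximal among non-finitely-generated ideals is prime, picks $p\in\m\setminus Q$, shows $Q=D+Qp^t\subseteq D+\m^t$ for a finitely generated $D\subseteq Q$, and concludes $Q=D$ from (d). You instead pass to $\gr{R}$, which is Noetherian by Hilbert's basis theorem since it is a quotient of $(R/\m)[X_1,\dots,X_n]$, and run the standard filtered--graded lifting argument: lift homogeneous generators of the initial ideal $I^*$ to $I'\subseteq I$, iterate the division step to get $I\subseteq I'+\m^N$ for all $N$, and use (d) applied to the finitely generated ideal $I'$ in place of the completeness hypothesis that this argument usually requires — exactly the point you flag, and your resolution of it is the right one (condition (d) says precisely that every ideal is closed in the $\m$-adic topology). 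Your approach is the standard transfer theorem ``$\gr{R}$ Noetherian $+$ all ideals closed $\Rightarrow R$ Noetherian,'' which is conceptually transparent but leans on graded machinery and Hilbert's basis theorem; the paper's argument is more elementary and self-contained, at the price of the less obvious Cohen-type prime trick. Minor points worth making explicit if you write this up: $\gr{R}$ is generated in degree one over $R/\m$ because $\m^j/\m^{j+1}$ is spanned by classes of $j$-fold products of elements of $\m$, and a homogeneous generating set of $I^*$ exists because one may replace any finite generating set by its homogeneous components.
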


\begin{proof}
$(a)\Leftrightarrow (b)$ follows from Vamos' result \cite{Vamos}*{Theorem 2}.

$(a)\Rightarrow (c)$ is clear and $(c)\Rightarrow (d)$ follows from Lemma \ref{KrullIntersection}.

$(d)\Rightarrow (a)$  There exists a finitely generated ideal $B$ of $R$ such that $\m = B + \m^2$. Then
$$\m= B + \m^2  = B+ (B+\m^2)^2 \subseteq B + \m^3 \subseteq \m,$$
and in general $\m = B + \m^n$ for every positive integer $n$. It follows that $\m = B$ and hence $\m$ is finitely
generated.
Suppose that $R$ is not Noetherian. Let $Q$ be maximal among the ideals $C$ of $R$ such that $C$ is not finitely
generated. Then $Q$ is a prime ideal of $R$ by a standard argument (see \cite{Cohen}*{Theorem 2}). Clearly $Q \neq \m$.  
Let $p \in \m$ with $p \notin Q$. By the choice of $Q$ the ideal $Q + Rp$ is finitely generated, say
\begin{equation}\label{eq_Q}
Q + Rp = R(q_1 + r_1p) + \dots + R(q_k + r_kp),
\end{equation}
for some positive integer $k$, $q_i \in Q (1 \leq i \leq k)$, $r_i \in R (1 \leq i \leq k)$. Let $D = Rq_1 + \cdots + Rq_k \subseteq Q$. Let $q\in Q\setminus D$. Then by equation (\ref{eq_Q}) there exist $s_1,\ldots, s_k\in R$ and $d\in D$ such that
$ q - d  =  (s_1r_1+\cdots +s_kr_k)p \in Q$. Since $Q$ is prime and $p\not\in Q$, $(s_1r_1+\cdots +s_kr_k)\in Q$, i.e. $Q = D + Qp$. Now $Q = D + Qp^t$ for every positive integer $t$ and
hence
$$Q = \bigcap_{s = 1}^{\infty} (D + Qp^s) \subseteq \bigcap_{s = 1}^{\infty} (D + \m^s) = D.$$
It follows that $Q = D$ and hence $Q$ is finitely generated, a contradiction. Thus $R$ is Noetherian.

\end{proof}

The diamond condition is equivalent to the condition that any injective hull of a simple  $R$-module  is locally Artinian, i.e. finitely generated submodules are Artinian. As seen in Remark \ref{remark}(2) a commutative ring satisfies $(\diamond)$ if and only if any injective hull $E$ of a simple $R$-module is locally of finite length, i.e. any finitely generated submodule of $E$ has finite length.
 
The following Proposition is well-known and can be found for example in \cite{SharpeVamos}*{Proposition 5.6}

\begin{prop}\label{localizing}
Let $R$ be  commutative ring, $\m$ a maximal ideal of $R$ and denote by $R_\m$ the localisation of $R$ by $\m$.
Then the injective hull $E=E(R/\m)$ of $R/\m$  as $R$-module is also the injective hull of $R_\m/\m R_\m$ as $R_\m$-module.
\end{prop}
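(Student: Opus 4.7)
The plan is to first endow $E$ with an $R_\m$-module structure extending its $R$-action, and then verify that this structure realises $E$ as the injective hull of $R_\m/\m R_\m$ over $R_\m$.

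The key preliminary step is to show that every $s \in R \setminus \m$ acts as an automorphism of $E$. Injectivity of the multiplication map $m_s : E \to E$ follows from the fact that $s$ is non-zero in the field $R/\m$, so $m_s$ is injective on the essential socle $R/\m \subseteq E$; the kernel of $m_s$ is an $R$-submodule of $E$ meeting $R/\m$ trivially, hence is zero. For surjectivity, $m_s(E) \cong E$ is injective, so it is a direct summand of $E$; since $E$ is the injective hull of a simple module, $E$ is indecomposable, forcing $m_s(E) = E$. This permits a well-defined $R_\m$-action on $E$ by the formula $(r/s)\cdot x := m_s^{-1}(rx)$. Because every element of $R\setminus\m$ acts as an $R$-automorphism of $E$, every $R$-submodule is automatically $R_\m$-stable, so the $R$- and $R_\m$-submodule lattices of $E$ coincide. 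In particular, the inclusion $R_\m/\m R_\m \cong R/\m \hookrightarrow E$ is essential in $E$ when viewed as $R_\m$-modules.

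To verify that $E$ is $R_\m$-injective I would invoke Baer's criterion. Given an ideal $J \subseteq R_\m$ and an $R_\m$-linear map $f : J \to E$, put $I = J \cap R$; since $R_\m = S^{-1}R$ with $S = R\setminus\m$, one checks that $J = IR_\m$ and that every element of $J$ has the form $r/s$ with $r \in I$ and $s \in S$. The restriction $f|_I$ is $R$-linear and extends, by $R$-injectivity of $E$, to some $\tilde f : R \to E$. Setting $\phi(r/s) := m_s^{-1}(\tilde f(r))$ produces a well-defined $R_\m$-linear map $R_\m \to E$ whose restriction to $J$ coincides with $f$.

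The principal subtlety is the surjectivity of $m_s$ for $s \notin \m$: this is precisely where the hypothesis that $E$ is the injective hull of a \emph{simple} $R$-module enters, through its indecomposability. Once this is in hand, well-definedness of the various maps constructed above is routine and the essentiality and Baer-criterion arguments follow their standard patterns.
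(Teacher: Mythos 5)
Your overall strategy is the standard one; note that the paper offers no argument of its own here, citing Sharpe--V\'amos instead, so any complete proof is welcome. Most of your proposal is correct: injectivity of $m_s$ via essentiality of the socle, surjectivity via indecomposability of $E$ (alternatively: if $rs=0$ then $rx=0$ by injectivity of $m_s$, so $\mathrm{Ann}_R(s)\subseteq\mathrm{Ann}_R(x)$ and injectivity of $E$ gives $x\in sE$), the induced $R_\m$-structure, and the Baer-criterion verification all go through (just read ``$J\cap R$'' as the contraction along $R\to R_\m$, which need not be injective).

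The one flawed step is the sentence asserting that, because each $s\in R\setminus\m$ acts as an $R$-automorphism of $E$, ``every $R$-submodule is automatically $R_\m$-stable, so the $R$- and $R_\m$-submodule lattices of $E$ coincide.'' The inference is invalid: every nonzero integer acts as an automorphism of $\QQ$, yet $\ZZ$ is not a $\ZZ_{(p)}$-submodule of $\QQ$. Worse, the conclusion itself is false for $E(R/\m)$ over non-Noetherian rings. Take $R=\ZZ\ltimes \ZZ(p^{\infty})$ (the idealization) with $\m=p\ZZ\oplus\ZZ(p^{\infty})$. Since $\ZZ(p^{\infty})$ is divisible and each of its nonzero subgroups contains the subgroup of order $p$, every nonzero ideal of $R$ contains the minimal ideal $0\oplus\langle 1/p\rangle\cong R/\m$; hence $R$ itself embeds in $E=E_R(R/\m)$ as a submodule $Ry$ with $\mathrm{Ann}_R(y)=0$. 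For a prime $q\neq p$ one has $q\notin\m$, but $m_q^{-1}(y)\notin Ry$: an equation $q r y=y$ with $r\in R$ would force $qr=1$ in $R$, which is impossible. So the two submodule lattices of $E$ genuinely differ in general; indeed, their coincidence is exactly the nontrivial extra condition (b) in the Proposition that follows this one in the paper, and taking it for granted would trivialize the Question posed there. Fortunately your argument only needs the trivial direction: an $R_\m$-submodule of $E$ is in particular an $R$-submodule, hence meets the essential $R$-submodule $R/\m\cong R_\m/\m R_\m$ nontrivially, which gives essentiality over $R_\m$. After deleting (or weakening) that one claim, your proof is correct.
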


It would be good to have a kind of local-global argument for condition $(\diamond)$ in comparison to Vamos' result on co-Noetherian rings. The following Proposition intends to find this kind of argument.

\begin{prop} Let $R$ be a commutative ring, $\m \in \Max{R}$ and $E=E(R/\m)$ the injective hull of $R/\m$. The
following statements are equivalent:
\begin{enumerate}
 \item[(a)] $E$ is locally of finite length as $R_\m$-module;
 \item[(b)] $E$ is locally of finite length as $R$-module and the $R$-submodule generated by an element $x\in E$ and the $R_\m$-submodule generated by $x$ coincide.
\end{enumerate}
\end{prop}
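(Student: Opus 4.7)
The plan is to isolate one structural observation and then derive both implications formally from it. The observation is that any $R$-submodule $M$ of $E$ of finite $R$-length satisfies $Rx = R_\m x$ for every $x\in M$ and has the same length when regarded as an $R_\m$-module via the structure provided by Proposition \ref{localizing}. To prove this I would argue as follows. Every simple $R$-submodule of $E$ is isomorphic to $R/\m$ because $E$ is an essential extension of $R/\m$, so any such $M$ is annihilated by some power $\m^n$. The ring $R/\m^n$ is local, its only prime being $\m/\m^n$, so every $s\in R\setminus\m$ acts invertibly on $M$. Therefore the $R$-action on $M$ extends uniquely to an $R_\m$-action, which must coincide with the restriction of the ambient action on $E$; the $R$-submodules of $M$ coincide with its $R_\m$-submodules; and because $R_\m/\m R_\m\cong R/\m$ is simple both as $R$- and as $R_\m$-module, a single composition series witnesses the finite length of $M$ over both rings.

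For (a)$\Rightarrow$(b), take $x\in E$. By (a) the module $R_\m x$ has finite $R_\m$-length, so by the observation it has finite $R$-length and in particular $Rx = R_\m x$. More generally $Rx_1+\cdots+Rx_k = R_\m x_1+\cdots+R_\m x_k$ then has finite $R$-length, so $E$ is locally of finite length over $R$. For (b)$\Rightarrow$(a), given $x_1,\ldots,x_k\in E$, the hypothesis yields $R_\m x_1+\cdots+R_\m x_k = Rx_1+\cdots+Rx_k$, which has finite $R$-length by (b); the observation then upgrades this to finite $R_\m$-length.

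The main obstacle will be pinning down the observation cleanly: the invertibility of elements of $R\setminus\m$ on $M$, deduced from the locality of $R/\m^n$, is the crucial ingredient that synchronises $R$- and $R_\m$-submodules and makes the two notions of length coincide. Once this is in place, both implications are essentially bookkeeping and no further use of injective-hull theory beyond Proposition \ref{localizing} is required.
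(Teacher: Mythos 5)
Your overall mechanism is the paper's: everything reduces to showing that on the relevant finite-length pieces of $E$ the elements of $R\setminus\m$ act invertibly, so that $R$- and $R_\m$-generated submodules (and lengths) coincide; the paper gets this from $R=Ra+\m$, hence $R=Ra+\m^k$ and $1-ab\in\m^k$, while you get it from the locality of $R/\m^n$ --- the same computation. However, as written there are two slips. First, your key observation is stated for $R$-submodules $M\subseteq E$ of finite \emph{$R$-length}, yet in (a)$\Rightarrow$(b) you apply it to $R_\m x$, about which you only know finite \emph{$R_\m$-length}; the observation as stated does not deliver ``finite $R_\m$-length $\Rightarrow$ finite $R$-length''. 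The fix is to run your argument in that direction (as the paper does): since $R_\m$ is local, a finite-length $R_\m$-module is killed by $(\m R_\m)^k$, hence $\m^k x=0$, and then invertibility of $R\setminus\m$ modulo $\m^k$ gives $R_\m x=Rx$ together with the identification of the two submodule lattices and lengths.

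Second, in your proof of the observation the step ``every simple $R$-submodule of $E$ is isomorphic to $R/\m$, so $M$ is annihilated by some $\m^n$'' is a non sequitur: the composition factors of $M$ are subquotients, not submodules, of $E$, so essentiality of $R/\m$ in $E$ does not by itself exclude factors $R/\p$ with $\p\neq\m$ maximal. The claim is true but needs an extra line, for instance: by Proposition \ref{localizing} every $a\in R\setminus\m$ acts bijectively on $E$, hence injectively on $M$, hence bijectively on the Artinian module $M$ (an injective endomorphism of an Artinian module is surjective), so every composition factor of $M$ is a simple $R$-module on which $R\setminus\m$ acts invertibly and is therefore isomorphic to $R/\m$. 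Note also that for (b)$\Rightarrow$(a) you do not need the observation at all: hypothesis (b) already gives $Rx=R_\m x$ for every $x\in E$, so the $R$- and $R_\m$-submodule lattices of $E$ coincide and finite $R$-length transfers immediately to finite $R_\m$-length, which is exactly the paper's argument for that implication.
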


\begin{proof}
$(a)\Rightarrow (b)$
Suppose $E$ is locally of finite length as $R_\m$-module. Let $0\neq x\in E$. By hypothesis
$R_m x$ has finite length as $R_\m$-module. Hence there exist $k>0$ such that $\m^k x \subseteq (\m R_\m)^k
x = 0$. For any  $a\in R\setminus \m$ we have $R = Ra + \m$, which implies also $R = Ra + \m^k$. Thus
there exists $b\in R$ such that $1-ab\in \m^k$, i.e. $x=abx$. Hence $a^{-1}x = bx \in Rx$ shows that $R_m x = Rx$.
Hence the $R$-submodule generated by any set of elements of $E$ coincides with the $R_\m$-submodule generated by that
set. In particular any finitely generated $R$-submodule of $E$ is also a finitely generated $R_\m$-submodule of $E$,
which has finite length.

$(b)\Rightarrow (a)$ The condition that $R$-submodules and $R_\m$-submodules generated by a set coincides means that the lattice of submodules of $_RE$ and $_{R_\m}E$ are identical. Hence $E$ is locally of finite length as $R$-module implies also that it is locally of finite length as $R_\m$-module.
\end{proof}

\begin{cor}
The following statements are equivalent for a commutative ring $R$.
\begin{enumerate}
\item[(a)]  $R_\m$ satisfies $(\diamond)$ for all $\m\in\Max{R}$.
\item[(b)] $R$ satisfies $(\diamond)$ and the $R$-submodule generated by an element $x\in E(R/\m)$ and the $R_\m$-submodule generated by $x$ coincide.
\end{enumerate}
\end{cor}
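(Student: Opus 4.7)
The plan is to obtain the corollary immediately by combining the preceding Proposition with Proposition \ref{localizing}, translating statements about injective hulls over $R_\m$ into statements about the same set with its $R$-module structure.

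First I would unpack the two sides in terms of single injective hulls, indexed by $\m\in\Max{R}$. Over a commutative ring every simple module is of the form $R/\m$ for some $\m\in\Max{R}$, and by Remark \ref{remark}(2) "locally Artinian" and "locally of finite length" coincide for such modules. Hence $R$ satisfies $(\diamond)$ if and only if $E(R/\m)$ is locally of finite length as an $R$-module for every $\m\in\Max{R}$. The same reasoning, applied to the quasi-local ring $R_\m$ whose unique simple module is $R_\m/\m R_\m$, shows that $R_\m$ satisfies $(\diamond)$ if and only if $E(R_\m/\m R_\m)$ is locally of finite length as an $R_\m$-module. By Proposition \ref{localizing} these two injective hulls share a common underlying set $E$, and the $R_\m$-module structure on $E$ extends the $R$-module structure via localisation.

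Next I would apply the preceding Proposition to $E$, for each fixed $\m$. Its condition (a) is exactly the statement that $R_\m$ satisfies $(\diamond)$, while its condition (b) is the conjunction of "$E$ is locally of finite length as $R$-module" and "the $R$-submodule and $R_\m$-submodule of $E$ generated by any $x\in E$ coincide." Quantifying over $\m\in\Max{R}$, the first conjunct assembles into "$R$ satisfies $(\diamond)$," and the second conjunct is precisely the submodule-coincidence condition of (b) in the corollary. Matching the two conditions maximal ideal by maximal ideal gives $(a)\Leftrightarrow (b)$.

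I do not anticipate a genuine obstacle: the proof is a direct assembly of the two cited results. The only point requiring care is making explicit that statement (b) is to be read with a quantifier over all $\m\in\Max{R}$ and all $x\in E(R/\m)$, so that the $\m$-by-$\m$ equivalences supplied by the preceding Proposition can be aggregated into the global equivalence asserted here.
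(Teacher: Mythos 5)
Your proposal is correct and is essentially the argument the paper intends: the corollary is stated without proof precisely because it follows, as you describe, by combining the preceding Proposition with Proposition \ref{localizing} and the observation from Remark \ref{remark}(2) that for commutative rings $(\diamond)$ amounts to the hulls $E(R/\m)$ being locally of finite length, then quantifying over all $\m\in\Max{R}$. Your explicit care with the quantifiers over $\m$ and $x$ is exactly the only point that needs attention, and you handle it correctly.
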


{\bf Question:} Does there exist a commmutative ring $R$ that satisfies $(\diamond)$, but $R_\m$ does not satisfy $(\diamond)$, for some $\m\in \Max{R}$ ?

\section{Commutative semiprimary quasi-local rings}

A quasi-local commutative ring is co-noetherian if and only if  it is Noetherian.
Recall, that an ideal $I$ of a ring $R$ is called {\emph{subdirectly irreducible}} if $R/I$ has an essential simple socle.
Clearly a ring $R$ satisfies $(\diamond)$ if and only if $R/I$ is Artinian, for all subdirectly irreducible ideals $I$ of $R$.

\subsection{Commutative quasi-local rings with square-zero maximal ideal}\label{trivialextension}

Given any vector space $V$ over a field $F$, the trivial extension (or idealization) is defined on the vector space $R=F\times V$ with multiplication given by $(a, v)(b, w)= (ab, aw+vb)$, for all $a,b\in F$ and $v,w\in V$. Any such trivial extension $R$ is a commutative quasi-local ring that satisfies $(\diamond)$. However $R$ is Noetherian if and only if $V$ is finite dimensional.

\begin{lem}\label{squarezero} Any commutative quasi-local ring with square-zero maximal ideal satisfies  $(\diamond)$.
\end{lem}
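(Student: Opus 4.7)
The plan is to show directly that every finitely generated submodule of the injective hull $E = E(R/\m)$ of the unique simple $R$-module has finite length, which yields $(\diamond)$ via Remark \ref{remark}(2).

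The starting point is the single observation that $\m x \subseteq \Soc{E}$ for every $x \in E$. Indeed, $\m(\m x) = \m^2 x = 0$ gives $\m x \subseteq \Ann_E(\m)$, and $R$ being semiprimary ($R/\m$ is a field and $\m$ is nilpotent) yields $\Ann_E(\m) = \Soc{E}$ as recorded in the proof of Proposition \ref{semiprimary}. Moreover $\Soc{E} = R/\m$, since $E$ is an essential extension of the simple module $R/\m$ whose socle is therefore forced to coincide with $R/\m$.

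From here I would take any nonzero finitely generated submodule $N = Rx_1 + \cdots + Rx_n$ of $E$. Essentiality of $\Soc{E} = R/\m$ in $E$, together with its simplicity, forces $R/\m \subseteq N$, and the previous observation gives $\m N \subseteq R/\m$. Hence $N/(R/\m)$ is annihilated by $\m$ and is therefore a vector space over the field $F = R/\m$, generated over $F$ by the images of $x_1, \ldots, x_n$. This gives $\dim_F\bigl(N/(R/\m)\bigr) \leq n$, so $N$ has length at most $n+1$.

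I do not foresee any real obstacle: the argument rests entirely on the single observation $\m E \subseteq \Soc{E}$, which forces every finitely generated submodule of $E$ to have radical length at most two. As an alternative route one could instead verify condition (c) of Proposition \ref{semiprimary}, which is trivial when $\m^2 = 0$: then $(I:\m^2) = R$ for every ideal $I$, so $(I:\m^2)/I = R/I$ is cyclic, and the equivalence (c)$\Rightarrow$(a) of Proposition \ref{semiprimary} gives $(\diamond)$ at once.
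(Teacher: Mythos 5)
Your argument is correct, and it takes a different route from the paper. The paper does not go into the injective hull at all: it uses the observation (stated just before the subsection) that $(\diamond)$ is equivalent to $R/K$ being Artinian for every subdirectly irreducible ideal $K$, and then, since $\m$ is a semisimple $R$-module when $\m^2=0$, it writes $\m = L \oplus K$ for a complement $L$, notes $\Soc{R/K}=\m/K\simeq L$ is simple, and reads off from the exact sequence $0\to \m/K\to R/K\to R/\m\to 0$ that $R/K$ has length $2$. Your proof instead works inside $E=E(R/\m)$: the same structural fact ($\m^2=0$ forces $\m x\subseteq \Ann_E(\m)=\Soc{E}$, which is simple by essentiality) gives the quantitative bound that an $n$-generated submodule $N$ satisfies $\Soc{E}\subseteq N$, $\m N\subseteq \Soc{E}$, hence $\mathrm{length}(N)\leq n+1$; this yields $(\diamond)$ directly and even gives an explicit length bound that the paper's argument only provides for cyclic essential extensions. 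Your second route, via condition (c) of Proposition \ref{semiprimary}, is also valid and arguably the slickest of the three: $R$ is semiprimary here, and with $\m^2=0$ one has $(I:\m^2)=R$, so $(I:\m^2)/I=R/I$ is cyclic for every ideal $I$ and (c)$\Rightarrow$(a) applies at once; there is no circularity, since Proposition \ref{semiprimary} is established before this lemma and does not depend on it. The paper's proof is the more self-contained and exhibits the length-$2$ structure of the subdirectly irreducible factors explicitly; your versions trade that explicitness for brevity or for a uniform bound on all finitely generated submodules of the injective hull.
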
 

\begin{proof}
Let $(R,\m)$ be a commutative quasi-local ring with $\m^2=0$. Since $\m$ is a vector space over $R/\m$ it is semisimple.
Let $K$ be any subdirectly irreducible ideal of $R$. If $K=\m$, then $R/\m$ is simple. So assume $K\subset \m$. Then there exists a complement L such that $\m=L\oplus K$ and $\Soc{R/K}=\m/K \simeq L$ is simple.
$$\begin{CD} 0 @>>> \m/K @>>> R/K @>>>  R/\m @>>>  0 \end{CD}$$
is a short exact sequence. Hence $R/K$ has length $2$.
\end{proof}

\subsection{Commutative quasi-local rings with cube-zero maximal ideal}
In this section we will characterise commutative quasi-local rings $(R,\m)$ with $\m^3=0$ satisfying $(\diamond)$. Recall that $\Soc{R}=\mathrm{Ann}(\m)=\{r\in R: r\m=0\}$. Hence $\m^2\subseteq \Soc{R}$. We start with a simple observation.
\begin{lem}\label{cube-zerolocal}
Let $(R,\m)$ be a commutative quasi-local ring with $\m^3=0$.
\begin{enumerate}
\item[(1)] If $\m/\Soc{R}$ is finitely generated, then $R$  satisfies $(\diamond)$.
\item[(2)] If $\Soc{R}$ is finitely generated, then $R$ satisfies $(\diamond)$ if and only if $\m/\Soc{R}$ is finitely generated.
\end{enumerate}
\end{lem}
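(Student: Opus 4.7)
Both parts will be proved by verifying the characterisation of $(\diamond)$ given in Proposition \ref{semiprimary}(c): since $R$ is semiprimary (commutative quasi-local with $\m^3=0$), $R$ satisfies $(\diamond)$ if and only if $(I:\m^2)/I$ is finitely generated for every left ideal $I$ with $(I:\m)/I$ finitely generated. The crucial simplification offered by the assumption $\m^3=0$ is the identity $\m\cdot\m^2=0$, which forces $\m\subseteq(I:\m^2)$ for every ideal $I$. Since $\m$ is maximal, this pins down $(I:\m^2)$ to be either $R$ (precisely when $\m^2\subseteq I$) or $\m$, and reduces the finite generation question to a question about $\m/I$.

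For part (1), assume $\m/\Soc{R}$ is finitely generated and let $I$ be an ideal with $(I:\m)/I$ finitely generated. If $\m^2\subseteq I$ then $(I:\m^2)/I=R/I$ is cyclic, and we are done. Otherwise the structural observation gives $(I:\m^2)/I=\m/I$, and I would use the short exact sequence
\[
0 \longrightarrow (I:\m)/I \longrightarrow \m/I \longrightarrow \m/(I:\m) \longrightarrow 0.
\]
The first term is finitely generated by hypothesis, while the containment $\Soc{R}\subseteq(I:\m)$ (immediate from $\m\cdot\Soc{R}=0\subseteq I$) exhibits the third term as a quotient of the finitely generated module $\m/\Soc{R}$. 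Hence $\m/I$ is finitely generated and (c) holds.

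For part (2), the implication $(\Leftarrow)$ is immediate from (1). For $(\Rightarrow)$, I would apply Proposition \ref{semiprimary}(c) to $I=0$: the hypothesis $(0:\m)=\Soc{R}$ is finitely generated, so $(0:\m^2)$ must be finitely generated too. If $\m^2\neq 0$ the structural observation gives $(0:\m^2)=\m$, so $\m$ and its quotient $\m/\Soc{R}$ are finitely generated. If instead $\m^2=0$, then $\m\subseteq\Soc{R}$ forces $\m/\Soc{R}=0$ trivially.

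I do not anticipate a genuine obstacle. The argument rests on two elementary containments, $\m\subseteq(I:\m^2)$ (from $\m^3=0$) and $\Soc{R}\subseteq(I:\m)$, together with the maximality of $\m$; everything else is routine manipulation of the colon ideal and an appeal to Proposition \ref{semiprimary}.
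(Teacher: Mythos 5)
Your proof is correct; it differs from the paper's mainly in part (1). The paper gets (1) almost for free from Lemma \ref{soc-artinian}: since $\m^2\subseteq\Soc{R}$, the quotient $\m/\Soc{R}$ is a vector space over $R/\m$, so if it is finitely generated then $R/\Soc{R}$ is Artinian and Lemma \ref{soc-artinian} yields $(\diamond)$ at once. You instead verify Proposition \ref{semiprimary}(c) directly: $\m^3=0$ forces $\m\subseteq(I:\m^2)$, hence $(I:\m^2)\in\{\m,R\}$, and in the nontrivial case you bound $\m/I$ by the exact sequence $0\rightarrow (I:\m)/I\rightarrow \m/I\rightarrow \m/(I:\m)\rightarrow 0$, whose ends are finitely generated by hypothesis and because $\Soc{R}\subseteq(I:\m)$ makes the right-hand term a quotient of $\m/\Soc{R}$. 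This is sound (note only that when $\m^2\not\subseteq I$ one also has $\m\not\subseteq I$, so $(I:\m)$ is a proper ideal and hence contained in $\m$, which your sequence tacitly uses); what the paper's route buys is brevity, since Lemma \ref{soc-artinian} has already absorbed the work, whereas your route stays entirely inside the colon-ideal formalism of Proposition \ref{semiprimary} and needs no appeal to Lemma \ref{soc-artinian}. For part (2) the two arguments essentially coincide: the paper applies Proposition \ref{semiprimary} to $R$ itself, identifying $\SocZ{R}=(0:\m^2)$ with $\m$, which is exactly your application of condition (c) to $I=0$; your explicit treatment of the degenerate case $\m^2=0$ is a small point the paper glosses over.
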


\begin{proof}
(1) If $\m/\Soc{R}$ is finitely generated, then $R/\Soc{R}$ is Artinian and by Lemma \ref{soc-artinian}, $R$ satisfies $(\diamond)$.

(2) If $\Soc{R}$ is finitely generated, then by Proposition \ref{semiprimary} $R$ satisfies $(\diamond)$ if and only if $\SocZ{R}=\m$ is Artinian if and only if $\m/\Soc{R}$ is finitely generated.
\end{proof}

The last Lemma raises the question, whether  the converse of (1) holds? That is, whether  $\m/\Soc{R}$ needs to be finitely generated for a commutative quasi-local ring $R$ with $\m^3=0$ and satisfying property $(\diamond)$? As we will see in Example \ref{example_Mio}, this need  not be the case.

\begin{lem}\label{correspondence}
Let $(R,\m)$ be a commutative quasi-local ring with residue field $F=R/\m$. Suppose $\m^3=0$. Then there exists a correspondence between subdirectly irreducible ideals of $R$ that do not contain $\Soc{R}$ and non-zero linear maps $f: \Soc{R}\rightarrow F$.
Each corresponding pair $(I,f)$ satisfies $$\mathrm{Soc(R)}+I=V_f:=\{a\in \m\mid f(\m a)=0\}.$$
\end{lem}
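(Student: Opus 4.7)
The plan is a forward construction $I\mapsto f$ and a Zorn-based backward existence, with the identity $\Soc{R}+I=V_f$ verified by direct inclusion. Two easy observations streamline the proof: $\m\cdot\Soc{R}=0$ (by definition of $\Soc{R}=\Ann_R(\m)$) turns $\Soc{R}$ into an $F$-vector space so that linear maps $\Soc{R}\to F$ are meaningful; and the assumption $\m^3=0$ gives $\m^2\subseteq\Soc{R}$, ensuring $\m a\subseteq\Soc{R}$ for every $a\in\m$ and hence making the defining condition $f(\m a)=0$ of $V_f$ sensible.

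For the forward direction, let $I$ be subdirectly irreducible with $\Soc{R}\not\subseteq I$ (so $I\subseteq\m$, since $\m$ is the unique maximal ideal). The canonical map $\Soc{R}\to R/I$ has image $(\Soc{R}+I)/I$, which is killed by $\m$ and therefore lies inside the simple socle $\Soc{R/I}$; being non-zero by hypothesis, it must equal $\Soc{R/I}$. Hence $\Soc{R}/(\Soc{R}\cap I)\cong F$, and any choice of isomorphism produces a non-zero $f\colon\Soc{R}\to F$ with $\ker f=\Soc{R}\cap I$, unique up to a non-zero scalar. To verify $\Soc{R}+I=V_f$: for $(\subseteq)$, $\Soc{R}\subseteq V_f$ is immediate, and for $a\in I\subseteq\m$ one has $\m a\subseteq I\cap\m^2\subseteq I\cap\Soc{R}=\ker f$; for $(\supseteq)$, $a\in V_f$ gives $\m a\subseteq\ker f\subseteq I$, so $a\in (I:\m)$, and from $(I:\m)/I=\Soc{R/I}=(\Soc{R}+I)/I$ we conclude $a\in\Soc{R}+I$.

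For the backward direction, given non-zero $f$ with kernel $H\subsetneq\Soc{R}$, apply Zorn's lemma to the poset of ideals $J$ with $J\cap\Soc{R}=H$ (non-empty, closed under chain unions) to obtain a maximal such $I$. Since $\m^3=0$, the ring $R/I$ is semiprimary, so $\Soc{R/I}$ is automatically essential; it remains to show it is simple. Suppose it properly contained $L=(\Soc{R}+I)/I$: then pick an $F$-line $L'\subseteq\Soc{R/I}$ with $L\cap L'=0$ and lift a generator to $x\in R$. Then $x\notin\Soc{R}+I$, yet $(I+Rx)\cap\Soc{R}=H$: indeed, if $y=i+rx\in\Soc{R}$, its class $y+I$ lies in $L$ (as $y\in\Soc{R}$) and equals $r(x+I)\in R(x+I)=L'$, so $y+I\in L\cap L'=0$, forcing $y\in I\cap\Soc{R}=H$. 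This contradicts the maximality of $I$, so $\Soc{R/I}$ is simple and $I$ is subdirectly irreducible, with associated functional proportional to $f$; the identity $\Soc{R}+I=V_f$ then follows from the forward direction, since $V_{cf}=V_f$ for any non-zero scalar $c$.

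The main obstacle is this backward step, specifically proving that the $I$ produced by Zorn has simple socle on $R/I$. The crucial input is the $F$-vector-space dichotomy inside $\Soc{R/I}$: an element $y=i+rx\in\Soc{R}$ has its reduction lying simultaneously in the two disjoint $F$-lines $L$ and $L'$ and therefore vanishes, yielding a strictly larger ideal with the same intersection with $\Soc{R}$ and contradicting maximality.
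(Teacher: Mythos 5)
Your proof is correct and follows essentially the same route as the paper's: the same functional $f$ with $\Ker{f}=\Soc{R}\cap I$, the same identity $\Soc{R}+I=V_f$ checked by double inclusion using $\m^2\subseteq\Soc{R}$, and a Zorn construction whose poset (ideals $J$ with $J\cap\Soc{R}=\Ker{f}$) coincides with the paper's poset of ideals containing $\Ker{f}$ and avoiding an element $x$ with $f(x)=1$, since $\Ker{f}$ has codimension one in $\Soc{R}$. The only cosmetic differences are that you obtain the inclusion $V_f\subseteq\Soc{R}+I$ from the identification $\Soc{R/I}=(I:\m)/I$ rather than from essentiality of the socle, and that you verify simplicity of $\Soc{R/I}$ by an explicit argument instead of invoking the standard property of ideals maximal with respect to excluding $x$.
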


\begin{proof}
	Let $I$ be a subdirectly irreducible ideal that  does not contain $\Soc{R}$, then 
	$Soc(R/I) = (Soc(R)+I)/I$ is simple. Thus $\Soc{R} = Fx \oplus (\Soc{R}\cap I)$, for a non-zero element $x\in \Soc{R}$. 
	Let $f:\Soc{R}\rightarrow F$ be the linear map such that $f_{\mid_{\Soc{R}\cap I}}=0$ and $f(x)=1$.
	Clearly $\Soc{R}+I \subseteq V_f$, because $\m(\Soc{R}+I) = \m I\subseteq \m^2\cap I \subseteq  \Soc{R}\cap I$.
	
	
	To show that $V_f=\Soc{R}+I$ we use the essentiality of $\Soc{R/I} = (\Soc{R}+I)/I$ in $R/I$:
	For any $a\in V_f\setminus I$, there exists $r\in R$ such that $ra + I$ is a non-zero element of $Soc(R/I)=(\Soc{R}+I)/I$. Note that $r\not\in \m$ since otherwise $f(ra)=0$ and hence $ra\in \mathrm{Ker}(f)\subseteq I$. Therefore  $r$ is invertible and $a+I = r^{-1}ra+I  \in \Soc{R/I}$, i.e. $V_f=\Soc{R}+I$. 
	
	On the contrary, let $f$ be any non-zero element  $f\in\mathrm{Hom}_F(\Soc{R},F)$. Then there exists a non-zero element $x\in \Soc{R}$ with $f(x)=1$ and hence $\Soc{R}=Fx\oplus \mathrm{Ker}(f)$. Let $I$ be an ideal of $R$ that contains $\mathrm{Ker}(f)$ and that is maximal with respect to $x\not\in I$. Thus $I$ is subdirectly irreducible and $\Soc{R/I}=\Soc{R}+I=(Fx\oplus I)/I$ is simple and essential in $R/I$. By construction $\mathrm{Ker}(f) = I\cap \Soc{R}$. 
	
Note that $\m(\Soc{R}+I) = \m I \subseteq \m^2 \cap I \subseteq \Soc{R}\cap I = \mathrm{Ker}(f)$, i.e. $\Soc{R}+I\subseteq V_f$. To show the converse, let $a\in V_f\setminus I$, then by essentiality there exists $r\in R$ with $ra = x + y \in \Soc{R}+I$ with $x\in \Soc{R}$ and $y\in I$ and $ra\not\in I$. If $r\in \m$, then $ra\in \mathrm{Ker}(f)\subseteq I$, contradicting essentiality. Hence $r\not\in \m$ and $a\in \Soc{R}+I$, i.e. $V_f=\Soc{R}+I$.
\end{proof}

 \begin{thm}\label{main} Let $(R,\m)$ be a commutative quasi-local ring with residue field $F$ and $\m^3=0$. Then $R$ satisfies $(\diamond)$ if and only if $\m/V_f$ is finite dimensional for any $f\in\mathrm{Hom}_F(\Soc{R}, F)$.
\end{thm}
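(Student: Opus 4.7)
The plan is to characterise $(\diamond)$ as a finite-length condition on each subdirectly irreducible factor $R/I$, and then dispatch the resulting cases according to whether $\Soc{R}\subseteq I$ or not. First, as in the proof of Lemma \ref{KrullIntersection}, Birkhoff's theorem together with Remark \ref{remark}(2) (which says that for commutative rings, finitely generated Artinian modules have finite length) implies that $R$ satisfies $(\diamond)$ if and only if $R/I$ has finite length for every subdirectly irreducible ideal $I$ of $R$. So the task reduces to determining for which $I$ the quotient $R/I$ has finite length.

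I would dispose of the easy case $\Soc{R}\subseteq I$ first. Here $\m^2\subseteq\Soc{R}\subseteq I$, so $\m/I$ is annihilated by $\m$ and is thus a semisimple $R$-module. Subdirect irreducibility of $I$ forces $\m/I=\Soc{R/I}$ to be simple (or zero, when $I=\m$). Hence $R/I$ has length at most $2$, and these ideals cause no obstruction to $(\diamond)$ regardless of any hypothesis.

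For the remaining case $\Soc{R}\not\subseteq I$, Lemma \ref{correspondence} produces a nonzero $f\in\mathrm{Hom}_F(\Soc{R},F)$ with $\Soc{R}+I=V_f$. A short calculation gives $\m^2\subseteq V_f$: if $a\in\m^2$ then $\m a\subseteq\m^3=0$, so $f(\m a)=0$. Consequently $\m/V_f$ is annihilated by $\m$, so its $R$-submodule lattice coincides with its $F$-subspace lattice, and its $R$-length equals $\dim_F(\m/V_f)$. I would then read off the length of $R/I$ from the chain
\[
I\;\subseteq\; V_f\;\subseteq\;\m\;\subseteq\; R,
\]
whose consecutive factors are $V_f/I=\Soc{R/I}$ (simple by hypothesis on $I$), $\m/V_f$ (of length $\dim_F(\m/V_f)$), and $R/\m\cong F$ (simple). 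It follows that $R/I$ has finite length if and only if $\dim_F(\m/V_f)<\infty$.

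Combining the two cases, $(\diamond)$ holds iff $\dim_F(\m/V_f)<\infty$ for every subdirectly irreducible ideal $I$ with $\Soc{R}\not\subseteq I$, with $f$ the associated map. Since Lemma \ref{correspondence} shows that every nonzero $f\in\mathrm{Hom}_F(\Soc{R},F)$ arises in this way, and since $f=0$ gives $V_0=\m$ (so the dimension condition is vacuously satisfied), this is equivalent to the stated condition that $\m/V_f$ be finite dimensional for every $f\in\mathrm{Hom}_F(\Soc{R},F)$. The main technical point is the length computation, which requires showing that $\m/V_f$ contributes exactly $\dim_F(\m/V_f)$ to the length of $R/I$; this rests on the inclusion $\m^2\subseteq V_f$ that makes $\m/V_f$ semisimple as an $R$-module.
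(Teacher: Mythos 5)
Your proof is correct and follows essentially the same route as the paper: both reduce $(\diamond)$ to the Artinian (equivalently, finite-length) condition on subdirectly irreducible factors $R/I$, split into the cases $\Soc{R}\subseteq I$ and $\Soc{R}\not\subseteq I$, and in the latter case use both directions of Lemma \ref{correspondence} together with the chain $I\subseteq V_f\subseteq \m\subseteq R$ and the semisimplicity of $\m/V_f$ (from $\m^2\subseteq V_f$). The only cosmetic differences are that you compute lengths directly in the case $\Soc{R}\subseteq I$ where the paper invokes Lemma \ref{squarezero}, and you package the two implications into a single length count.
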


\begin{proof}
Suppose that $R$ satisfies $(\diamond)$ and let $f\in \mathrm{Hom}(\Soc{R},F)$. If $f=0$, then $V_f=\m$ and $\m/V_f$ has dimension zero. If $f\neq 0$, then by Lemma \ref{correspondence}, there exists a subdirectly irreducible ideal $I$ with $V_f=Soc(R)+I$ and $I$ not containing $\Soc{R}$. As $R$ satisfies $(\diamond)$, $R/I$ is Artinian and as a subquotient $\m/V_f = (\m/I)/(V_f/I)$ is also Artinian. As $R$-module $\m/V_f$ is semisimple since $\m^2\subseteq V_f$. Hence $\m/V_f$ must be finite dimensional.
	
Suppose $\m/V_f$ is finite dimensional for any $f\in\mathrm{Hom}(\Soc{R}, F)$.
Let $I$ be a subdirectly irreducible ideal of $R$.
If $\Soc{R}\subseteq I$, then $R/I$ is an $R/\m^2$-module. Since $R/\m^2$ is a quasi-local ring with square-zero radical, we have by Lemma \ref{squarezero}, that $R/\m^2$ satisfies $(\diamond)$. Hence $R/I$ must be Artinian.
If $\Soc{R}\not\subseteq I$, then by Lemma \ref{correspondence}, there exists a non-zero map $f:\Soc{R}\rightarrow F$ such that $\Soc{R}+I=V_f$. By hypothesis $\m/V_f$ is finite dimensional and is therefore Artinian as $R$-module. As $R/\m$ and $V_f/I$ are simple modules, also $R/I$ is Artinian, proving that $R/I$ is Artinian for any subdirectly irreducible ideal $I$ of $R$, i.e. $R$ satisfies $(\diamond)$.
\end{proof}

Let $(R,\m)$ be any commutative quasi-local ring. The {\it associated graded ring} of $R$ with respect to the $\m$-filtration is the commutative  ring
$\gr{R}=\bigoplus_{n\geq 0} \m^n/\m^{n+1}$ with  multiplication given by 
$$(a+\m^{i+1})(b+\m^{j+1}) =ab + \m^{i+j-1},\qquad \forall a\in \m^i, b\in \m^j, \:\: \mbox{and}\: i,j\geq 0.$$
For any ideal $I$ of $R$, the associated graded ideal is $\gr{I} = \bigoplus_{n\geq 0} (I\cap \m^n  + \m^{n+1})/\m^{n+1}.$
In particular $\gr{\m}=\bigoplus_{n\geq 1} \m^n/\m^{n+1}$ is the unique maximal ideal of $\gr{R}$. Hence $(\gr{R},\gr{\m})$ is a commutative quasi-local ring with residue field $F=R/\m$. Furthermore, $\gr{\Soc{R}}$ is contained in $\Soc{\gr{R}}$.

In case $\m^3=0$, the associated graded ring of $(R,\m)$ is $gr{R} = F \times \m/\m^2\times \m^2$, where $F=R/\m$ is the residue field of $R$. For any proper ideal $I$ of $R$ one has $$\gr{I}=0\times (I+\m^2)/\m^2 \times (I\cap \m^2)$$ and in particular $\gr{\Soc{R}}=0\times \Soc{R}/\m^2 \times \m^2=\Soc{\gr{R}},$ because for $(0,a+\m^2,b)\in \Soc{\gr{R}}$ we have that
$(0,a+\m^2,b)(0,x+\m^2,0) = (0,0,ax)=(0,0,0)$, for all $x\in \m$ if and only if $a\in \Soc{R}$. To shorten notation we will write the elements of $\gr{R}$ as 
$(a_0,a_1,a_2)$ for $a_i\in \m^i$, where the $i$th component is understood to be modulo $\m^{i+1}$.

\begin{lem}\label{compare_Vf} Let $(R,\m)$ be a commutative quasi-local ring with residue field $F$ and $\m^3=0$. For any  $f\in \mathrm{Hom}_F(\gr{\Soc{R}},F)$ there exists $g\in \mathrm{Hom}_F(\Soc{R},F)$ such that $$V_f = \{a\in \gr{\m}: f(\gr{\m}a)=0\} = \gr{V_g},$$ 
where $V_g$ is the ideal of $R$ defined by Lemma \ref{correspondence}.
\end{lem}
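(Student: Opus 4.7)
My approach is to compute both sides of the claimed equality explicitly, using the identification $\gr{R} = F \times \m/\m^2 \times \m^2$ together with the vanishing $\m^3 = 0$, and then to choose $g$ so that the two descriptions match term by term.

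First, I would decompose any $f \in \mathrm{Hom}_F(\gr{\Soc{R}}, F)$ as a pair $(f_1, f_2)$ via the splitting $\gr{\Soc{R}} = 0 \times \Soc{R}/\m^2 \times \m^2$, with $f_1 \colon \Soc{R}/\m^2 \to F$ and $f_2 \colon \m^2 \to F$. The key multiplicative observation is that for any $(0, x_1, x_2), (0, a_1, a_2) \in \gr{\m}$ the product in $\gr{R}$ collapses to $(0, 0, x_1 a_1)$, since every other term lands in $\m^3 = 0$. Hence the requirement $f(\gr{\m}\,(0,a_1,a_2)) = 0$ reduces to $f_2(\m a_1) = 0$, with no constraint on $a_2$; note that $\m a_1$ does not depend on the chosen representative of $a_1$ modulo $\m^2$, because $\m \cdot \m^2 = 0$. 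This yields
$$V_f = 0 \times A \times \m^2, \qquad A := \{a_1+\m^2 \in \m/\m^2 : f_2(\m a_1) = 0\}.$$

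Next I would construct $g$ by fixing an $F$-linear splitting $\Soc{R} = U \oplus \m^2$ and setting $g|_U = 0$ and $g|_{\m^2} = f_2$. Since $\m a \subseteq \m^2$ for every $a \in \m$, this gives $g(\m a) = f_2(\m a)$, hence $V_g = \{a \in \m : f_2(\m a) = 0\}$, and moreover $\m^2 \subseteq V_g$ because $\m \cdot \m^2 = 0$. Applying the graded-ideal formula $\gr{I} = 0 \times (I+\m^2)/\m^2 \times (I\cap \m^2)$ to $I = V_g$ then simplifies to $\gr{V_g} = 0 \times V_g/\m^2 \times \m^2$, whose middle component is precisely $A$. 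Comparison with the previous display yields $V_f = \gr{V_g}$.

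The whole argument is a direct calculation and I do not anticipate a genuine obstacle. The one subtle point worth stressing is that the contribution of $f_1$ is invisible inside $V_f$, which is exactly what permits us to define $g$ using only $f_2$; without this observation one might mistakenly expect to need to encode $f_1$ as well.
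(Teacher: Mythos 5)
Your proof is correct and follows essentially the same route as the paper: compute that multiplication in $\gr{R}$ between elements of $\gr{\m}$ lands in the $\m^2$-component, so membership in $V_f$ only constrains the middle coordinate via $f_2$, and then pull this back to $R$ through a splitting $\Soc{R}=U\oplus\m^2$. The only (immaterial) difference is that the paper takes $g(a)=f(0,a,\pi(a))$, thus also incorporating the component of $f$ on $\Soc{R}/\m^2$, whereas you take $g=f_2\circ\pi$; since $\m a\subseteq\m^2$ for all $a\in\m$, both choices give the same $V_g$, so your version is equally valid.
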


\begin{proof}
Let $f:\gr{\Soc{R}}\rightarrow F$ and denote by $\pi:\Soc{R}\rightarrow \m^2$ the projection onto $\m^2$, since $\m^2$ is a direct summand of $\Soc{R}$. Define $g:\Soc{R}\rightarrow F$ by $g(a)=f(0,a, \pi(a))$, for all $a\in \Soc{R}$. Then 
$(0,x,y)\in V_f$ if and only if $f(0,0,tx)=0$,  for all $t\in \m$. Since $tx=\pi(tx) \in \m^2$, the later is equivalent to $g(tx)=0$ for all $t\in \m$, i.e. $x\in V_g$ (in the ring $R$).
Hence $V_f = \{ (0,x,y) \in \gr{\m} \mid x\in V_g, \: y\in \m^2\} = \gr{V_g}$.
\end{proof}

\begin{cor}\label{associatedgraded} Let $(R,\m)$ be a quasi-local ring with $\m^3=0$. Then $R$ satisfies $(\diamond)$ if and only if its associated graded ring $\gr{R}$ does.
\end{cor}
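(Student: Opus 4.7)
The plan is to apply Theorem \ref{main} twice — once to $R$ and once to $\gr{R}$ — and then compare the resulting finiteness conditions via Lemma \ref{compare_Vf}. Since $\m^3=0$, any triple product in $\gr{\m}$ lies in degree $\geq 3$, which vanishes, so $\gr{\m}^3=0$; thus $(\gr{R},\gr{\m})$ is a commutative quasi-local ring with residue field $F=R/\m$ to which Theorem \ref{main} applies. The claim then reduces to showing that $\dim_F \m/V_g<\infty$ for all $g\in \mathrm{Hom}_F(\Soc{R},F)$ if and only if $\dim_F \gr{\m}/V_f<\infty$ for all $f\in \mathrm{Hom}_F(\gr{\Soc{R}},F)=\mathrm{Hom}_F(\Soc{\gr{R}},F)$.

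For the dimension comparison, I would first note that $\m\cdot\m^2\subseteq \m^3=0$ forces $\m^2\subseteq V_g$ for \emph{every} $g$. A direct computation from the definition of the associated graded ideal then gives
$$\gr{V_g} \;=\; (V_g+\m^2)/\m^2 \;\oplus\; (V_g\cap\m^2) \;=\; V_g/\m^2 \;\oplus\; \m^2,$$
while $\gr{\m}= \m/\m^2\oplus\m^2$. Hence $\gr{\m}/\gr{V_g} \cong \m/V_g$ as $F$-vector spaces, and the two dimensions agree. Combined with the identity $V_f=\gr{V_g}$ from Lemma \ref{compare_Vf}, this yields $\dim_F \gr{\m}/V_f = \dim_F \m/V_g$ whenever $f$ and $g$ are related by that lemma. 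This already gives one direction: if every $\m/V_g$ is finite dimensional, then so is every $\gr{\m}/V_f$.

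The remaining subtlety — and the main point that requires care — is that the recipe of Lemma \ref{compare_Vf} produces $g$ from $f$, so to obtain the converse implication I would verify that the assignment $f\mapsto g$ is surjective. Fix an $F$-subspace $C\subseteq \Soc{R}$ complementary to $\m^2$, so that $\Soc{R}=C\oplus\m^2$ and $\gr{\Soc{R}}$ identifies naturally with $(\Soc{R}/\m^2)\oplus \m^2$. Given $g\in \mathrm{Hom}_F(\Soc{R},F)$, define $f:\gr{\Soc{R}}\to F$ by $f(\bar{x},y):=g(\tilde{x}+y)$, where $\tilde{x}\in C$ is the unique lift of $\bar{x}\in \Soc{R}/\m^2$. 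A direct check shows that the $g$ reconstructed from this $f$ via the recipe $a\mapsto f(a+\m^2,\pi(a))$ of Lemma \ref{compare_Vf} coincides with the original $g$. With surjectivity established, the equivalence of the two Theorem \ref{main} conditions follows, yielding the corollary; everything beyond the surjectivity step is bookkeeping on top of results already in place.
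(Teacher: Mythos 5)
Your proposal is correct, and the forward direction is exactly the paper's argument: given $f$, Lemma \ref{compare_Vf} produces $g$ with $V_f=\gr{V_g}$, and since $\m^2\subseteq V_g$ one gets $\gr{\m}/V_f\simeq \m/V_g$, so Theorem \ref{main} transfers finiteness from $R$ to $\gr{R}$. Where you genuinely diverge is the converse. You make the correspondence of Lemma \ref{compare_Vf} surjective: starting from $g\in\mathrm{Hom}_F(\Soc{R},F)$ you lift it, via a choice of complement $C$ with $\Soc{R}=C\oplus\m^2$, to an $f$ on $\Soc{\gr{R}}=\gr{\Soc{R}}$ whose associated functional is again $g$, hence $V_f=\gr{V_g}$, and then you apply Theorem \ref{main} to $\gr{R}$ as well. (Note your reconstruction check requires using the same projection $\pi$ as in Lemma \ref{compare_Vf}; alternatively, one can bypass the reconstruction entirely, since the relation $V_f=\gr{V_g}$ only depends on the values $f(0,0,w)$ for $w\in\m^2$, so any $f$ extending $g|_{\m^2}$ on the last component works.) The paper instead does not invoke Theorem \ref{main} for $\gr{R}$ in this direction: from a nonzero $f\in\Soc{R}^*$ with $\m^2\not\subseteq\mathrm{Ker}(f)$ it builds the explicit subdirectly irreducible ideal $I=0\times V_f/\m^2\times\mathrm{Ker}(f)$ of $\gr{R}$, verifies that $0\times 0\times Fx$ is an essential simple submodule of $\gr{R}/I$, and uses $(\diamond)$ for $\gr{R}$ directly to conclude $\m/V_f$ is finite dimensional. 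Your route is more symmetric and avoids the essentiality verification and the case split on whether $\m^2\subseteq\mathrm{Ker}(f)$, at the modest cost of the dual-lifting construction (choice of complement), which is anyway already implicit in the proof of Lemma \ref{compare_Vf}. Both arguments are sound.
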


\begin{proof}
If $R$ satisfies $(\diamond)$ and $f:\Soc{\gr{R}}\rightarrow F$ is a non-zero map, then by Lemma \ref{compare_Vf} there exists $g:\Soc{R}\rightarrow F$ such that $V_f=\gr{V_g}$. Since $V_g$ contains $\Soc{R}$ and hence $\m^2$, we have $V_f=\gr{V_g}=0\times V_g/\m^2 \times \m^2$. Thus, $\gr{\m}/V_f \simeq \m/V_g$. By Theorem \ref{main}, $\m/V_g$ is finite dimensional as $R$ satisfies $(\diamond)$. Hence $\gr{\m}/V_f$ is finite dimensional for all $f\in \Soc{\gr{R}}^*$. Again by Theorem \ref{main}, $\gr{R}$ satisfies $(\diamond)$.

Let $f:\Soc{R}\rightarrow F$ be any non-zero linear map and let $V_f=\{a\in \m  \mid f(\m a)=0\}$. 
If $\m^2 \subseteq \mathrm{Ker}(f)$, then $V_f=\m$.
If $\m^2\not\subseteq \mathrm{Ker}(f)$, then there exists $x\in \m^2$ with  $f(x)=1$.
Note that $V_f^2\subseteq \mathrm{Ker}(f)$, hence  $I = 0 \times V_f/\m^2 \times \mathrm{ker}(f)$ is a subdirectly irreducible ideal of $\gr{R}$. To see this note that  $E=0\times 0 \times Fx$ is a simple submodule of 
 $\gr{R}/I \simeq F \times \m/V_f \times Fx$. We will show that $E$ is essential in $\gr{R}/I$. 
 Let $(0,\overline{a},\overline{b}) \in \gr{R}/I$. If $a\not\in V_f$, there exists $c\in \m$ such that $f(ac)\neq 0$ and
 $ac - f(ac) x \in \mathrm{Ker}(f)$. Hence $(0,\overline{a},\overline{b})(0,\overline{c},0)=(0,0,f(ac)x) \in E$ is non-zero. 
 If $a\in V_f$, i.e. $\overline{a}=0$, and $\overline{b}\neq 0$, then $(0,0,\overline{b})$ is a non-zero element of $E$. 
 Hence $E$ is an essential simple submodule of $\gr{R}/I$ and if $\gr{R}$ satisfies $(\diamond)$, the quotient $\gr{R}/I$ and therefore also $\m/V_f$ must be Artinian, thus finite dimensional. By Theorem \ref{main}, $R$ satisfies $(\diamond)$.
\end{proof}

\section{Examples}

Let $(R,\m)$ be a commutative quasi-local ring with $\m^3=0$. The associated graded ring $\gr{R}$ is of the form $\gr{R} = F \oplus V \oplus W$ where $F=R/\m$ and $V=\m/\m^2$ and $W=\m^2$ are spaces over $F$. Moreover, the multiplication of $R$ induces a symmetric bilinear map $\beta:V\times V\rightarrow W$. Hence $\gr{R}$ is uniquely determined by $(F,V,W,\beta)$ and its multiplication can be identified with the multiplication of a generalized matrix ring. 
Writing the elements of $S=F\times V \times W$ as $3$-tuples $(\lambda, v,w)$ we have that the multiplication is given by
$$ (\lambda_1, v_1,w_1) (\lambda_2, v_2,w_2) = (\lambda_1\lambda_2,\lambda_1v_2+\lambda_2v_1,\lambda_1w_2 + \lambda_2w_1 + \beta(v_1,v_2)).$$
The units are precisely the elements $(\lambda,v,w)$ with $\lambda\neq 0$ and the unique maximal ideal of $S$ is given by $\mathrm{Jac}(S)=0\times V \times W$.
Let $$V^\perp_\beta = \{a\in V\mid \beta(V,a)=0\},$$ then $\Soc{S}=0\times V^\perp_\beta \times W$, while $\mathrm{Jac}(S)^2=0\times 0 \times \mathrm{Im}(\beta)$.

Recall, that $\beta$ is called non-degenerate or non-singular if $V^\perp_\beta=0$. In general $\beta$ need not be non-degenerate:

\begin{example}
	Let $F$ be any field and $V$ be any vector space over $F$ with countably infinite basis $\{v_0,v_1, v_2, \ldots\}$. Define a symmetric bilinear form as $\beta:V\times V \rightarrow F$ with 
	$\beta(v_0,v_0)=1$ and $\beta(v_i,v_j)=0$ for any $(i,j)\neq (0,0)$. Then $S=F\times V \times F$ is a commutative quasi-local ring that satisfies $(\diamond)$, because $V^\perp_\beta=\mathrm{span}(v_i\mid i>0)$ and hence 
	$$\mathrm{Jac}(S)/\Soc{S} = (0\times V \times W)/(0\times V^\perp_\beta \times F) \simeq F$$ is one-dimensional. By Lemma \ref{cube-zerolocal}, $S$ satisfies $(\diamond)$. Note that $S$ is not Artinian. Moreover, $S = \gr{R}$, where $R=F[x_0, x_1, x_2 \ldots]/ \langle x_0^3, \: x_ix_j \mid (i,j)\neq (0,0)\rangle.$
\end{example}

The bilinear form of the last example was not non-degenerate. Since $0\times V^\perp_\beta \times 0$ is always an ideal, we can pass to $F\times V/V^\perp_\beta \times W$ where the bilinear form $\beta$ is now non-degenerate. The following is a natural example of such a ring with non-degenerate bilinear form:

\begin{example}\label{ex_integral}  Let $F=\mathbb{R}$ and let $V=C([0,1])$ be the space of continuous real valued functions on $[0,1]$. Set
	$$\langle f,g \rangle  = \int_0^1 f(x)g(x) dx, \qquad \forall f,g \in C([0,1]).$$
	Then $\langle , \rangle:V\times V \rightarrow \mathbb{R}$ is a   non-degenerate symmetric bilinear form on $V$. Hence, by  Lemma \ref{cube-zerolocal}, $R=\mathbb{R}\times V \times \mathbb{R}$ is a commutative quasi-local ring with cube-zero radical, that does not satisfy $(\diamond)$, because its socle $\Soc{R}=0\times 0\times \RR$ is one-dimensional, hence finitely generated, but $\mathrm{Jac}(R)/\Soc{R}$ is infinite dimensional, hence not finitely generated as $R$-module.
\end{example}

These kind of rings must occur as the associated graded ring of a quotient of a commutative quasi-local ring $(R,\m)$ with $\m^3=0$ that does not satisfy $(\diamond)$.

\begin{prop}\label{badfactor} A commutative quasi-local ring $(R,\m)$ with $\m^3=0$ and residue field $F$ does not satisfy $(\diamond)$ if and only if it has a factor $R/I$ whose  associated graded ring $\gr{R/I}$ is of the form $F\times V \times F$ with $\mathrm{dim}(V)=\infty$ and non-degenerate bilinear form $\beta:V\times V\rightarrow F$.
\end{prop}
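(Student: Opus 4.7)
The plan is to prove each direction separately, reducing to the associated graded ring via Corollary \ref{associatedgraded} in both cases.

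For the backward direction, suppose $R$ admits a factor $R/I$ with $\gr{R/I}\simeq F\times V\times F$, $\dim_F V=\infty$ and $\beta$ non-degenerate. In such a ring the socle is $0\times V^\perp_\beta\times F = 0\times 0\times F$, which is one-dimensional, whereas the radical modulo socle is the infinite-dimensional $V$. Lemma \ref{cube-zerolocal}(2) then shows that $\gr{R/I}$ fails $(\diamond)$, so by Corollary \ref{associatedgraded} so does $R/I$; and since property $(\diamond)$ descends to ring quotients (the injective hull of a simple $R/I$-module $S$ equals $\Ann_{E_R(S)}(I)$, and finitely generated $R/I$-submodules there are also finitely generated over $R$), $R$ itself fails $(\diamond)$.

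For the forward direction, suppose $R$ does not satisfy $(\diamond)$. Theorem \ref{main} provides a nonzero $f\in\mathrm{Hom}_F(\Soc{R},F)$ with $\m/V_f$ infinite dimensional, and Lemma \ref{correspondence} supplies a subdirectly irreducible ideal $I\subset R$ such that $\Soc{R}\not\subseteq I$ and $V_f=\Soc{R}+I$. Set $\bar R=R/I$ and $\bar\m=\m/I$. Then $\bar\m^3=0$, the socle $\Soc{\bar R}=V_f/I$ is one-dimensional over $F$, and $\bar\m^2=(\m^2+I)/I$ is contained in $\Soc{\bar R}$. The crux is to show $\bar\m^2=\Soc{\bar R}$: if instead $\bar\m^2=0$, then $\bar\m\subseteq\Ann_{\bar R}(\bar\m)=\Soc{\bar R}$ would force $\bar R$ to have length two, making $\m/V_f$ finite dimensional and contradicting the choice of $f$. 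Hence $\bar\m^2$ is a nonzero subspace of a one-dimensional space, so the two coincide. Then $\gr{\bar R}\simeq F\times V'\times F$, where $V'=\m/(\m^2+I)$ surjects onto the infinite-dimensional $\m/V_f$ (since $V_f\supseteq \m^2+I$), so $\dim V'=\infty$. Finally, as already computed in the paper, the induced form $\beta$ on $V'$ satisfies $(V')^\perp_\beta=\Soc{\bar R}/\bar\m^2=0$, so $\beta$ is non-degenerate.

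The main obstacle I anticipate is ruling out the collapse $\bar\m^2=0$: it is precisely this step that converts the infinite-dimensional witness $\m/V_f$ into a genuine $F\times V\times F$ quotient with the $W$-component intact, rather than a square-zero quotient to which Lemma \ref{squarezero} would apply and which would spuriously satisfy $(\diamond)$.
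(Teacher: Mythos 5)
Your proof is correct and follows essentially the same route as the paper: Theorem \ref{main} and Lemma \ref{correspondence} produce the subdirectly irreducible ideal $I$, the key point in the forward direction is exactly the paper's argument that $(\m/I)^2$ cannot vanish and hence equals the one-dimensional socle, and the backward direction again goes through Corollary \ref{associatedgraded}. The only differences are cosmetic: you quote Lemma \ref{cube-zerolocal}(2) where the paper redoes the essential-socle/non-Artinian computation directly, and you spell out (via $\Ann_{E_R(S)}(I)$) the descent of $(\diamond)$ to factor rings, a step the paper leaves implicit.
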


\begin{proof} By Theorem \ref{main} $R$ does not satisfy $(\diamond)$ if and only if there exists $f\in \Soc{R}^*$ such that $\m/V_f$ has infinite dimension. Note that $f\neq 0$ since $V_f\neq \m$. 
By Lemma \ref{correspondence} there exists a subdirectly irreducible ideal $I$ of $R$ such that $V_f=\Soc{R}+I$. In particular $(\m/I)^2 = V_f/I=\Soc{R/I}\simeq F$, because if $\m^2\subseteq I$, then $\m/I\subseteq \Soc{R/I}=V_f/I$ and hence $\m=V_f$, contradicting $V_f\neq \m$. Hence $m^2\not\subseteq I$ and $(\m/I)^2 = \Soc{R/I}$ as $R/I$ has a simple socle.
Moreover, $\gr{R/I}=F\times \m/V_f \times F$, with  bilinear form $\beta: \m/V_f\times \m/V_f \rightarrow F$, which is non-degenerate by the definition of $V_f$.

On the other hand if $R$ has a factor $R/I$ whose associated graded ring $\gr{R/I}$ is of the form $F\times V \times F$ with infinite dimensional vector space $V$ and non-degenerate bilinear form $\beta:V\times V\rightarrow F$, then $\Soc{\gr{R/I}}=0\times   0\times F$ is a simple submodule of $\gr{R/I}$, which is essential since $\beta$ is non-degenerate. As $V$ is infinite dimensional, the semisimple $\gr{R/I}$-module $(0\times V \times F)/(0\times 0 \times F)$ is not artinian and hence $\gr{R/I}$ does not satisfy $(\diamond)$. By Corollary \ref{associatedgraded},  $R/I$ does not satisfy $(\diamond)$ and therefore also $R$ does not.
\end{proof}

Let $V=A$ be any unital commutative $F$-algebra. Consider the multiplication of $A$ as a symmetric non-degenerate bilinear map $\mu:A\times A \rightarrow A$ and form the ring $S=F\times A \times A$ as before. In order to apply Theorem \ref{main}
recall that $\m=0\times A \times A$ and $\Soc{S} = 0\times 0\times A$, as the multiplication of $A$ is non-degenerate. Hence elements of $\Soc{S}^*$ can be identified with elements of $A^*$. For any $f\in A^*$ we defined 
$$V_f = \{(0,a,b)\in \m \mid f(Aa)=0 \} = 0 \times I(f) \times A,$$
where $I(f)$ is the largest ideal of $A$ that is contained in $\mathrm{Ker}(f)$. 
Theorem \ref{main} says that $S$ satisfies $(\diamond)$ if and only if $\m/V_f\simeq A/I(f)$ is finite dimensional for any $f\in A^*$.
From the theory of coalgebras, we borrow the notion of  the {\it finite dual} $A^\circ$ of an algebra, which is the subspace of $A^*$ consisting of the elements $f\in A^*$ that contain an ideal of finite codimension in their kernel. Hence $S$ satisfies $(\diamond)$ if and only if $A^\circ=A^*$.

\begin{example}\label{example_Mio}
The trivial extension $A=F\times V$ of a vector space $V$ (see \ref{trivialextension}) is an example of an algebra $A$ satisfying $A^\circ = A^*$. To see this, note that for any linear subspace $U$ of $A$, $U\cap V$ is an ideal of $A$. Thus, if $f\in A^*$, then $\mathrm{Ker}(f)\cap V$ is an ideal of codimension less or equal to $2$ and $f\in A^\circ$. In particular for such $A$, $S=F\times A\times A$ satisfies $(\diamond)$. However if $V$ is infinite dimensional, then $\m/\mathrm{Soc(S)}\simeq V$ is not finitely generated as $S$-module, which shows that the converse of Lemma \ref{cube-zerolocal}(1)  does not hold.
\end{example}

However, it might happen that the kernel of an element $f\in A^*$ does not contain an ideal of finite codimension as the following example shows.

\begin{example}
Let $A$ be a commutative unital $F$-algebra with multiplication $\mu$ and $f\in A^*$.
 Note that the composition $\beta=f\circ \mu$ is a non-degenerate bilinear form if and only if $f(Aa) \neq 0$, for all non-zero $a\in A$. Or, in other words, $\beta$ is non-degenerate if and only if  $\mathrm{Ker}(f)$ does not contain any non-zero ideal of $A$.
Such a map $f$ can be constructed in case $F$ has characteristic zero and $A$  has a countably infinite multiplicative basis $\{b_n\}_{n\in \NN}$, i.e. $b_n b_m = b_{n+m}$ for all $n,m \in \NN$. This implies in particular that $b_0=1$. Suppose $f:A\rightarrow F$ is a linear map and let $0\neq a\in A$ such that $f(Aa)=0$. For  $a=\sum_{i=0}^n \lambda_i b_i \in A$ and any $m\geq 0$, we have
$$f(b_ma) = \sum_{i=0}^n \lambda_i  f(b_{m+i}).$$
Thus $v=(\lambda_0, \lambda_1, \ldots, \lambda_{n})$ is in the kernel of the linear map given by the matrix:
$$B_n=\left(\begin{array}{ccccc} f(b_0) & f(b_1) & f(b_2) & \cdots & f(b_n) \\  
f(b_1) & f(b_2) & f(b_3) & \cdots & f(b_{n+1}) \\
 f(b_2) & f(b_3) & f(b_4) & \cdots & f(b_{n+2}) \\
\vdots & &  & \ddots & \vdots \\
   f(b_n) & f(b_{n+1}) & f(b_{n+2}) & \cdots & f(b_{2n+1})
 \end{array} \right)$$
In particular $\mathrm{det}(B_n)=0$. Hence if the sequence $(f(b_n))_{n\in \NN}$ produces a sequence of matrices $(B_n)$ that have all non-zero determinant, then for each  $0\neq a  =\sum_{i=0}^n \lambda_i b_i \in A$, there exists $0\leq m \leq n$ such that $f(b_ma) \neq 0$, i.e. $\beta = f\circ\beta$ is non-degenerate.

Matrices of the form of $B_n$ are called {\it Toeplitz} or {\it Hankel matrices}. A particular example of such a matrix is the {\it Hilbert matrix}, which is the matrix
$$B_{n-1}=\left(\begin{array}{ccccc} 1 & \frac{1}{2} & \frac{1}{3} & \cdots & \frac{1}{n} \\ \frac{1}{2} & \frac{1}{3} &\frac{1}{4} & \cdots & \frac{1}{n+1} \\
\frac{1}{3} & \frac{1}{4} & \frac{1}{5} & \cdots & \frac{1}{n+2} \\
\vdots &\ddots &  & \cdots & \vdots \\
\frac{1}{n} &\frac{1}{n+1} & \frac{1}{n+2} & \cdots & \frac{1}{2n-1}
 \end{array} \right)$$
in case $F$ has characteristic $0$. 
In 1894, Hilbert computed that $\mathrm{det}(B_{n-1}) = \frac{c_n^4}{c_{2n}} $, where $c_n = \prod_{i=1}^{n-1} i^{n-i}$  (see \cite{Hilbert}).  Hence if we define  $f(b_n) = \frac{1}{n+1}$ for any $n\geq 0$ , then the kernel of $f$ does not contain any non-zero ideal and the bilinear form $\beta=f\circ \mu$ is non-degenerate. An algebra with a multiplicative basis as above is for example the polynomial algebra $A=F[x]$. Thus $S=F\times F[x] \times F[x]$ does not satisfy $(\diamond)$.
\end{example}

\section{Acknowledgement} 
The authors would like to thank Mio Iovanov for pointing out  example \ref{example_Mio} and for interesting discussions about an earlier draft of this paper.
This paper started with a visit of the last named author to the University of Porto in September 2011.
The authors also would like to thank Alveri Sant'ana for having discussed the results of this paper during his visit to Porto.
The two first named authors 
are members of CMUP (UID/MAT/00144/2013), which is funded by FCT with national (MEC) and European structural funds (FEDER), under the partnership agreement PT2020.

\begin{bibdiv}
 
 \begin{biblist}

\bib{BrownCarvalhoMatczuk}{article}{
   author = {{Brown}, K.},
   author={{Carvalho}, P.~A.~A.~B.},
   author={{Matczuk}, J.},
    title = {{Simple modules and their essential extensions for skew polynomial rings}},
  journal = {ArXiv e-prints},
   eprint = {1705.06596},
     year = {2017},
 }

 \bib{CarvalhoLompPusat}{article}{
   author={Carvalho, Paula A. A. B.},
   author={Lomp, Christian},
   author={Pusat-Yilmaz, Dilek},
   title={Injective modules over down-up algebras},
   journal={Glasg. Math. J.},
   volume={52},
   date={2010},
   number={A},
   pages={53--59},
   issn={0017-0895},
   review={\MR{2669095}},
   doi={10.1017/S0017089510000261},
}

\bib{CarvalhoMusson}{article}{
   author={Carvalho, Paula A. A. B.},
   author={Musson, Ian M.},
   title={Monolithic modules over Noetherian rings},
   journal={Glasg. Math. J.},
   volume={53},
   date={2011},
   number={3},
   pages={683--692},
   issn={0017-0895},
   review={\MR{2822809}},
   doi={10.1017/S0017089511000267},
}

\bib{CarvalhoHatipogluLomp}{article}{
   author={Carvalho, Paula A. A. B.},
   author={Hatipo{\u{g}}lu, Can},
   author={Lomp, Christian},
   title={Injective hulls of simple modules over differential operator
   rings},
   journal={Comm. Algebra},
   volume={43},
   date={2015},
   number={10},
   pages={4221--4230},
   issn={0092-7872},
   review={\MR{3366571}},
   doi={10.1080/00927872.2014.941469},
}

\bib{Cauchon}{article}{
   author={Cauchon, G.},
   title={Anneaux de polyn\^omes essentiellement born\'es},
   language={French},
   conference={
      title={Ring theory (Proc. Antwerp Conf.},
      address={NATO Adv. Study Inst.), Univ. Antwerp, Antwerp},
      date={1978},
   },
   book={
      series={Lecture Notes in Pure and Appl. Math.},
      volume={51},
      publisher={Dekker, New York},
   },
   date={1979},
   pages={27--42},
   review={\MR{563283}},
}

\bib{Cohen}{article}{
	author={Cohen, I. S.},
	title={Commutative rings with restricted minimum condition},
	journal={Duke Math. J.},
	volume={17},
	date={1950},
	pages={27--42},
	issn={0012-7094},
	review={\MR{0033276}},
}

\bib{Hatipoglu}{article}{
   author={Hatipo\u glu, Can},
   title={Stable torsion theories and the injective hulls of simple modules},
   journal={Int. Electron. J. Algebra},
   volume={16},
   date={2014},
   pages={89--98},
   issn={1306-6048},
   review={\MR{3239073}},
}

\bib{HatipogluLomp}{article}{
   author={Hatipo{\u{g}}lu, Can},
   author={Lomp, Christian},
   title={Injective hulls of simple modules over finite dimensional
   nilpotent complex Lie superalgebras},
   journal={J. Algebra},
   volume={361},
   date={2012},
   pages={79--91},
   issn={0021-8693},
   review={\MR{2921612}},
   doi={10.1016/j.jalgebra.2012.03.033},
}
	
\bib{Hilbert}{article}{
   author={Hilbert, David},
   title={Ein Beitrag zur Theorie des Legendre'schen Polynoms},
   language={German},
   journal={Acta Math.},
   volume={18},
   date={1894},
   number={1},
   pages={155--159},
   issn={0001-5962},
   review={\MR{1554854}},
}

\bib{Hirano}{article}{
   author={Hirano, Y.},
   title={On injective hulls of simple modules},
   journal={J. Algebra},
   volume={225},
   date={2000},
   number={1},
   pages={299--308},
   issn={0021-8693},
   review={\MR{1743663}},
   doi={10.1006/jabr.1999.8124},
}

\bib{Jans}{article}{
   author={Jans, J. P.},
   title={On co-Noetherian rings},
   journal={J. London Math. Soc. (2)},
   volume={1},
   date={1969},
   pages={588--590},
   issn={0024-6107},
   review={\MR{0248172}},
}

\bib{Jategaonkar}{article}{
   author={Jategaonkar, Arun Vinayak},
   title={Certain injectives are Artinian},
   conference={
      title={Noncommutative ring theory},
      address={Internat. Conf., Kent State Univ., Kent Ohio},
      date={1975},
   },
   book={
      publisher={Springer, Berlin},
   },
   date={1976},
   pages={128--139. Lecture Notes in Math., Vol. 545},
   review={\MR{0432703}},
}

\bib{Matlis}{article}{
   author={Matlis, Eben},
   title={Injective modules over Noetherian rings},
   journal={Pacific J. Math.},
   volume={8},
   date={1958},
   pages={511--528},
   issn={0030-8730},
   review={\MR{0099360}},
}

\bib{McConnellRobson}{book}{
   author={McConnell, J. C.},
   author={Robson, J. C.},
   title={Noncommutative Noetherian rings},
   series={Graduate Studies in Mathematics},
   volume={30},
   edition={Revised edition},
   note={With the cooperation of L. W. Small},
   publisher={American Mathematical Society, Providence, RI},
   date={2001},
   pages={xx+636},
   isbn={0-8218-2169-5},
   review={\MR{1811901}},
   doi={10.1090/gsm/030},
}

\bib{Musson}{article}{
   author={Musson, Ian M.},
   title={Finitely generated, non-Artinian monolithic modules},
   conference={
      title={New trends in noncommutative algebra},
   },
   book={
      series={Contemp. Math.},
      volume={562},
      publisher={Amer. Math. Soc., Providence, RI},
   },
   date={2012},
   pages={211--220},
   review={\MR{2905561}},
}

	\bib{RosenbergZelinsky}{article}{
   author={Rosenberg, Alex},
   author={Zelinsky, Daniel},
   title={Finiteness of the injective hull},
   journal={Math. Z.},
   volume={70},
   date={1958/1959},
   pages={372--380},
   issn={0025-5874},
   review={\MR{0105434}},
}

\bib{SantanaVinciguerra}{article}{
   author = {{Sant'Ana}, A.},
   author={{Vinciguerra}, R.},
    title = {{On cyclic essential extensions of simple modules over differential operator rings}},
  journal = {ArXiv e-prints},
   eprint = {1704.04970},
     year = {2017},
  }

\bib{Schelter}{article}{
   author={Schelter, W.},
   title={Essential extensions and intersection theorems},
   journal={Proc. Amer. Math. Soc.},
   volume={53},
   date={1975},
   number={2},
   pages={328--330},
   issn={0002-9939},
   review={\MR{0389971}},
}

\bib{SharpeVamos}{book}{
   author={Sharpe, D. W.},
   author={V{\'a}mos, P.},
   title={Injective modules},
   note={Cambridge Tracts in Mathematics and Mathematical Physics, No. 62},
   publisher={Cambridge University Press, London-New York},
   date={1972},
   pages={xii+190},
   review={\MR{0360706}},
}

\bib{Smith}{article}{
	author={Smith, P. F.},
	title={The Artin-Rees property},
	conference={
		title={Paul Dubreil and Marie-Paule Malliavin Algebra Seminar, 34th
			Year },
		address={Paris},
		date={1981},
	},
	book={
		series={Lecture Notes in Math.},
		volume={924},
		publisher={Springer, Berlin-New York},
	},
	date={1982},
	pages={197--240},
	review={\MR{662261}},
}

\bib{Vamos}{article}{
   author={V{\'a}mos, P.},
   title={The dual of the notion of ``finitely generated''},
   journal={J. London Math. Soc.},
   volume={43},
   date={1968},
   pages={643--646},
   issn={0024-6107},
   review={\MR{0248171}},
}

 \end{biblist}

\end{bibdiv}

\end{document}